\newtheorem{theorem}{Theorem}
\newtheorem{lemma}{Lemma}
\newtheorem{proposition}{Proposition}
\newtheorem{remark}{Remark}
\newtheorem{definition}{Definition}
\def\network{{\cal N}}
\def\arcset{{\cal A}}
\def\nodeset{{\cal V}}
\def\pathset{{\cal P}}
\def\bfamily{{\cal F}}
\def\framework{NF-F}
\newtheorem*{rcvf_algorithm}{Variable-Fixing Algorithm}
\def\mytitle{Exact solution of network flow models with strong relaxations}
\begin{document}
\fancypagestyle{firststyle}
{
   \fancyhf{}
\topskip 30pt\headsep 0pt\headheight 0pt
\renewcommand{\headrulewidth}{0pt}
}

\fancypagestyle{mypagestyle}{%
  \fancyhf{}
  \fancyhead[OR]{V.L. de Lima, M. Iori, and F.K. Miyazawa}
  \fancyhead[OL]{\thepage}
  \fancyhead[EL]{\mytitle}
  \fancyhead[ER]{\thepage}
  \renewcommand{\headrulewidth}{.4pt}
}
\pagestyle{mypagestyle}
\date{}

\title{\Large \bf \mytitle}
\author[1]{Vin{\'i}cius L. de Lima}
\author[2]{Manuel Iori}
\author[1]{Fl{\'a}vio K. Miyazawa}
\affil[1]{Institute of Computing, University of Campinas (Brazil)}
\affil[2]{DISMI, University of Modena and Reggio Emilia (Italy)}
\date{}
\maketitle
\vspace*{-5ex}
\noindent

\begin{abstract}
We address the solution of Mixed Integer Linear Programming (MILP) models with strong relaxations that are derived from Dantzig-Wolfe decompositions and allow a pseudo-polynomial pricing algorithm. We exploit their network-flow characterization and provide a framework based on column generation, reduced-cost variable-fixing, and a highly asymmetric branching scheme that allows us to take advantage of the potential of the current MILP solvers. We apply our framework to a variety of cutting and packing problems from the literature. The efficiency of the framework is proved by extensive computational experiments, in which a significant number of open instances could be solved to proven optimality for the first time.
\end{abstract}

\noindent
{\bf Keywords:} Dantzig-Wolfe Decomposition; Network Flow; Strong Relaxation; Variable Selection; Variable-Fixing.

\thispagestyle{firststyle}
\section{Introduction} \label{sec:introduction}

Mixed Integer Linear Programming (MILP) is one of the most popular mathematical programming tools to solve optimization problems.
{\em Dantzig-Wolfe} (DW) decomposition \cite{DW61} has been extensively used to enhance the strength of MILP models in many applications, by reformulating an original {\em compact model} into a so-called {\em Dantzig-Wolfe Master} (DWM) problem where variables represent integer solutions of polyhedra induced by a set of constraints from the compact model.
A significant importance of models with stronger linear relaxation is to avoid an excessive enumeration in branch-and-bound trees. Moreover, as the polyhedron of their linear relaxation is closer to the convex hull of the integer solutions, their optimal fractional solutions may provide good hints on how to derive good-quality integer solutions.

Models derived from DW decomposition may be stronger than the original compact models, but they usually require an exponential number of variables. Consequently, their linear relaxation is typically solved by column generation (CG), a method where a restricted set of variables is initially considered and new variables are iteratively generated by solving a pricing problem (see, e.g., \cite{LD05}).
The exact solution of a DWM usually relies on sophisticated branch(-and-cut)-and-price (B\&P) algorithms, in which CG (and primal cuts) is embedded within branch-and-bound. These algorithms are state-of-the-art for many problems, but they are complex to implement, and their efficiency usually relies on problem-specific techniques.

The relation between integer solutions of bounded polyhedra with paths in acyclic networks allows us to associate a discrete pricing problem with an acyclic decision network in which each path is associated with a DWM variable (see, e.g., \cite{V00}). Hence, the DWM can be seen as a {\em path flow model} (where variables correspond to the flow on each {\em path} of the network). The interest in such representation is that it allows us to derive an {\em arc flow model} (where variables correspond to the flow on each {\em arc}) having the same linear relaxation value but exponentially fewer variables. In particular, any DWM of exponential size that allows a pseudo-polynomial pricing algorithm can be transformed into an equivalent arc flow model of pseudo-polynomial size.

To the best of our knowledge, Val\'erio de Carvalho \cite{V99} was the first to solve a pseudo-polynomial arc flow model in practice, over two decades ago, by developing a B\&P for the cutting stock problem (CSP). Still, the popularity of such models increased only in the last decade, following the consistent improvements in general MILP solvers. The fact that pseudo-polynomial arc flow models have the same strength, and a much smaller size, than their equivalent path flow models has allowed their solution by general MILP solvers to be an efficient alternative to sophisticated B\&P algorithms in many applications (see, e.g., \cite{LACIV21}).
However, large-scale instances often produce huge networks, which become a strong limitation in the efficiency of the MILP solver. In addition, in cases where the linear relaxation is not very strong, specialized B\&P algorithms are still a better alternative than the solution of an arc flow model by a MILP solver. Further research in this area is thus highly envisaged.

In this paper, we propose a general framework, which we call {\em network flow framework} (\framework), to address the issue raised by huge networks in the solution of arc flow models, while exploiting the potential of CG and general MILP solvers. We focus on arc flow models of pseudo-polynomial size with very strong linear relaxation, where: either (i) finding an optimal solution is hard, but proving optimality is easy; or (ii) finding an optimal solution is easy, but proving optimality is hard.
Both cases happen, for instance, in the classical pattern-based formulation for the CSP \cite{GG61,GG63}, which is a path flow model derived by a DW decomposition, in which the optimal integer solution value is conjectured
to be either $\lceil z_{lb} \rceil$ (case (i)) or $\lceil z_{lb} \rceil + 1$ (case (ii)), where $z_{lb}$ is the optimal dual bound (see, e.g., \cite{CDDIR15}).

\framework{} is effective in solving case (i) instances thanks to an innovative and highly asymmetric branching scheme in which a series of small-sized arc flow models are solved by a general MILP solver, in the spirit of the classical local branching \cite{FL03}. These models are expected to be sufficiently small to be quickly solved, but large enough to likely contain an optimal solution. In instances of case (ii), once an optimal solution is at hand, optimality is typically proven by completely exploring a branch-and-bound tree. This process is usually accelerated by the use of primal cuts to strengthen the relaxation. However, in many cases, the most effective cuts are problem-dependent and usually produce a heavy impact on the pricing problem, while the huge number of arcs may still be an unaddressed major issue. NF-F does not exploit the use of general primal cuts but rather focuses on the use of reduced-cost variable-fixing (RCVF) procedures, which are usually very effective for instances of case (ii). We particularly exploit the impact of sub-optimal dual solutions in variable-fixing with a two-fold interest: providing an increased reduction in the network size and possibly strengthening the linear relaxation.

We also provide specific contributions regarding the linear relaxation solution of arc flow and path flow models, as: a formalization of a dual correspondence between these models; a non-trivial CG algorithm that generates multiple paths and can be used to solve the relaxation of both models; and a generalization of a method to deal with dual-infeasibility in CG due to the limited precision of most Linear Programming (LP) solvers.
We provide applications to cutting and packing (C\&P) problems that allow pseudo-polynomial models with very strong relaxation, namely, the CSP, the two-stage guillotine CSP, the skiving stock problem, and the ordered open-end bin packing problem. For all such problems, we performed extensive computational experiments that prove the outstanding performance of \framework.

The remainder of this paper is organized as follows. Section \ref{sec:foundations} presents a brief review on network flow models derived from DW decompositions. Section \ref{sec:overview} provides an overview of \framework. Sections \ref{sec:linear_relaxation}, \ref{sec:variable_fixing}, and \ref{sec:branching} present the details of the techniques that we use to solve the linear relaxation, apply RCVF, and perform branching, respectively. 
Section \ref{sec:applications} discusses the applications to a number of C\&P problems. The outcome of the computational experiments is discussed in Section \ref{sec:experiments}, where we also contrast our results with those obtained by state-of-the-art algorithms. Finally, Section \ref{sec:conclusions} gives conclusions and future research directions.
A preliminary version of this work appeared in \cite{LIM21}.

\section{Network Flow and Dantzig-Wolfe Decompositions: Preliminaries} \label{sec:foundations}

Due to a well-known relation between integer solutions of polyhedra with paths in acyclic networks (see, e.g., \cite{V00}), a DWM can be directly represented as a path flow model. Then, based on the flow decomposition theorem by Ahuja et al. \cite{AMO93}, we can derive an arc flow model of the same strength (i.e., same linear relaxation value). This section briefly reviews path flow and arc flow models derived from DW decompositions.

A network $\network$ is a directed graph with a set of nodes $\nodeset$ and a set of arcs $\arcset \subseteq \nodeset \times \nodeset$. Two special nodes in $\nodeset$ are the source $v^+$ and the sink $v^-$, for which no arcs in $\arcset$ enters $v^+$ or leaves $v^-$. The set of all paths from $v^+ $to $v^-$ is given by $\pathset$, and the set of all arcs of a path $p \in \pathset$ is given by $\arcset_p$. The set of all paths in $\pathset$ that contain an arc $(u,v) \in \arcset$ is given by $\pathset_{(u,v)}$.

We assume that $\network$ is acyclic, as we are only concerned with decision networks underlying pricing problems from DW decompositions. In this context, $\nodeset$ and $\arcset$ represent, respectively, the states and decisions of a pricing algorithm. Each arc/decision $(u,v) \in \arcset$ is associated with a cost $c_{(u,v)} \in \mathbb{Z}$ and a contribution $a_{(u,v)} \in \mathbb{Z}^m$ to the constraints induced by the problem. Each path $p \in \pathset$ in the pricing network represents a DWM variable with cost $c_p = \sum_{(u,v) \in \arcset_p} c_{(u,v)}$ and constraint coefficients given by column $a_p = \sum_{(u,v) \in \arcset_p} a_{(u,v)}$. Then, the DWM can be formulated as a path flow model:
\begin{align}
\allowdisplaybreaks
\label{eq:path_flow_of}
 \min & \sum_{p \in \pathset}c_p \lambda_p, & \\
\label{eq:path_flow_constraints}
 \text{s.t.:} & \sum_{p\in \pathset}a_p\lambda_p \geq b, & \\
\label{eq:path_flow_domain}
 & \lambda_p \in \mathbb{Z}_+,  & \forall  p \in \pathset,
\end{align}

\noindent where $b \in \mathbb{Z}^m$ and each variable $\lambda_p \in \mathbb{Z}^+$ represents the flow on path $p \in \pathset$.
The following general arc flow model is equivalent to \eqref{eq:path_flow_of}--\eqref{eq:path_flow_domain}:
\begin{align}
\allowdisplaybreaks
\label{eq:arc_flow_of}
\min ~ & \sum_{(u,v) \in {\arcset}} c_{(u,v)}\varphi_{(u,v)}, & \\
\label{eq:arc_flow_conservation_constraints}
\text{s.t.: } & F_{\network, \varphi}(v) = \begin{cases} -z, &\text{ if } v = v^+, \\ z, &\text{ if } v = v^-, \\ 0, &\text{ otherwise}, \end{cases} & \forall v \in \nodeset,\\
\label{eq:arc_flow_side_constraints}
& \sum_{(u,v) \in {\arcset}} a_{(u,v)} \varphi_{(u,v)} \geq b, & \\
\label{eq:arc_flow_domain2}
& \varphi_{(u,v)} \in \mathbb{Z}_+, & \forall (u,v) \in {\arcset}, \\
\label{eq:arc_flow_domain1}
& z \in \mathbb{Z}, &
\end{align}

\noindent where $F_{\network, \varphi}(v) = \sum_{ (u,v) \in {\arcset}} \varphi_{(u,v)} - \sum_{ (v,w) \in {\arcset} } \varphi_{(v,w)}$.
Each variable $\varphi_{(u,v)}$ corresponds to the flow on arc $(u,v) \in \arcset$ and variable $z$ gives the total flow.

The flow decomposition theorem by Ahuja et al. \cite{AMO93} guarantees a one-to-many correspondence that preserves the objective value, between the primal space of the linear relaxation of path flow and arc flow models that are based on the same network. In this way, formulations \eqref{eq:path_flow_of}--\eqref{eq:path_flow_domain} and \eqref{eq:arc_flow_of}--\eqref{eq:arc_flow_domain1} have the same primal strength.
This allows us to handle primal solutions of both path flow and arc flow models equivalently.
In Section \ref{sec:linear_relaxation}, we also formalize a correspondence between the dual space of such models.

Examples of equivalent arc flow and path flow models derived from DW decompositions include the arc flow model in \cite{V99} and the pattern-based formulation in \cite{GG61,GG63} for the CSP. In the context of the capacitated vehicle routing problem, equivalent models are the $q$-route formulation by Christofides et al. \cite{CMT81} and the capacity-indexed formulation (see, e.g., \cite{FLLARUW06,PPU08}).
Although path flow and arc flow models based on the same network solve the same problem and have equally strong relaxations, each formulation may be preferable under certain specific aspects, which we contrast in the following.

{\bf Overall Solution.} The solution of path flow models typically relies on B\&P due to the exponential number of variables. State-of-the-art B\&P algorithms are usually specialized and contain problem-specific features (with the recent exception of \cite{PSUV20}, which solves a variety of problems). Arc flow models too can be solved by B\&P, but, due to their much smaller size, they may be solved in practice by MILP solvers, which is the most popular approach for these models.
Few authors explore the correspondence between path flow and arc flow models by combining them into a unique solution method. For instance, Pessoa et al. \cite{PUPR10} proposed a B\&P algorithm based on a path flow model that solves the problem at the root node as an arc flow model by a MILP solver if the network is sufficiently small.

{\bf Dealing with Exponential Networks.} In many strongly $\mathcal{NP}$-hard problems, the pricing only admits a decision network of exponential size (unless $\mathcal{P} = \mathcal{NP}$). Path flow models usually address the exponential nature of the network implicitly in specialized pricing algorithms that consider dominance and reduction criteria. On the other hand, practical solutions for arc flow models are limited to networks of at most pseudo-polynomial size. However, pseudo-polynomial arc flow models can be derived from exponential networks by relying on a state-space relaxation of the pricing (see, e.g., \cite{LACIV21}).

{\bf Linear Relaxation Solution.} The LP relaxation of path flow models is usually solved by CG. For arc flow, the LP relaxation of small- and medium-sized models may be efficiently solved by simplex/barrier algorithms. Still, larger models must rely on column-and-row generation (as in \cite{V99}). The basis of a path flow model is smaller as it does not consider flow conservation constraints. Thus, at each iteration, the restricted master problem is solved faster. In contrast, an arc flow basis is larger but less degenerate, so CG usually converges within fewer pricing iterations (see, e.g., \cite{LACIV21,SV13}). 

{\bf Branching and Cutting Planes.} It is well known that additional constraints based on arc flow variables are {\em robust}, i.e., they do not impact on the pricing structure. In contrast, additional constraints based on path flow variables are often non-robust and may heavily impact the pricing. In the context of network flow, this impact may cause an expansion of the network, increasing its complexity. Some authors solve path flow models by using robust branching and cutting planes based on arc flow variables (see, e.g., \cite{AV08,U11}). However, many state-of-the-art B\&P algorithms for path flow models still use non-robust branching constraints and cuts, since the improvement in terms of strength may pay off the increased pricing complexity. Then, a clear disadvantage of solving arc flow models with weak relaxations directly by a MILP solver is that they must rely solely on branching and cutting planes for general MILP, which may not be competitive when compared to the state-of-the-art for the equivalent path flow models. 

To summarize, path flow better addresses networks of exponential size and is also better suited for methods that rely on non-robust branching and/or cutting planes. On the other hand, arc flow is usually a good alternative to address problems with pseudo-polynomial networks and when the use of non-robust branching or cutting planes is not crucial, and their reduced size often allows the solution by MILP solvers.

\section{An Overview of the Solution Framework} \label{sec:overview}

In this section, we provide an overview of \framework. A depictive example is given in Figure \ref{fig:branching_tree}.
The input of \framework{} is a network $\network$, an upper bound $z_{ub}$ on the optimal integer solution value, the coefficients $c_{(u,v)}$ and $a_{(u,v)}$ for each arc $(u,v) \in \arcset$, and $b$. 
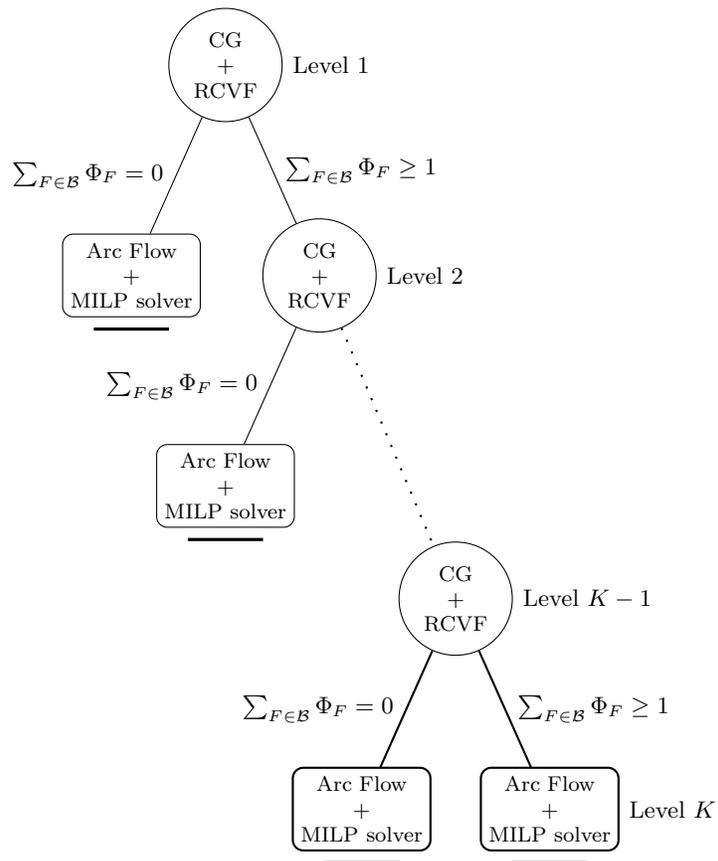
\begin{figure}[htb]
    \tikzset{thick,
         tree node/.style = {align=center, inner sep=0pt, font = \scriptsize},
every label/.append style = {font=\scriptsize},
                 S/.style = {draw, circle, rounded corners, minimum size = 11mm, inner sep=3pt,
                             top color=white, font = \scriptsize},
               ENL/.style = {
                             font=\footnotesize, left=1pt},
               ENR/.style = {
                             font=\footnotesize, right=1pt},
                     grow = down,
         sibling distance = 2.5cm,
           level distance = 2.8cm
           }
    \newcommand\LB{
                    \tikz\draw[very thick,solid] (-0.5,0) -- + (1,0);}

\centering
\begin{tikzpicture}
\node [S, align=center,label=0:{\footnotesize{Level 1}}] {CG\\+\\RCVF}
    child{node [S, rectangle, label=below:\LB, align=center] {Arc Flow\\+\\MILP solver}
        edge from parent node[ENL] {$ \sum_{F \in \mathcal{B}} \Phi_{F} = 0$}}
    child{node [S, align=center,label=0:{\footnotesize{Level 2}}] {CG\\+\\RCVF}
        child{node [S, rectangle, label=below:\LB,align=center] {Arc Flow\\+\\MILP solver}
            edge from parent node[ENL] {$ \sum_{F \in \mathcal{B}} \Phi_{F} = 0$}}
        child{node [S, align=center,below=0mm,right=-7mm,label=0:{\footnotesize{Level $K-1$}}] at (0.5,-1.5) {CG\\+\\RCVF}
			child{node [S, rectangle, label=below:\LB,align=center,solid] {Arc Flow\\+\\MILP solver}
            	edge from parent[solid] node[ENL] {$ \sum_{F \in \mathcal{B}} \Phi_{F} = 0$}}
			child{node [S, rectangle, label=below:\LB,align=center,solid,label=0:{\footnotesize{Level $K$}}] {Arc Flow\\+\\MILP solver}
            	edge from parent[solid] node[ENR] {$ \sum_{F \in \mathcal{B}} \Phi_{F} \geq 1$}}
            edge from parent[thick, loosely dotted] }
        edge from parent node[ENR] {$ \sum_{F \in \mathcal{B}} \Phi_{F} \geq 1$}
            };
\end{tikzpicture}
\caption{Example of a branching tree with $K$ levels.}
\label{fig:branching_tree}
\end{figure}
{The input is first used to build a path flow model of type \eqref{eq:path_flow_of}--\eqref{eq:path_flow_domain}.} 
The LP relaxation of the model is solved by the CG algorithm of Section \ref{sec:linear_relaxation}.
To remove arcs that do not improve the current incumbent, we use three RCVF strategies, each based on a different search for dual solutions (described in Section \ref{sec:variable_fixing}). These strategies are effective in those cases where the optimal solution is at hand and the challenge is to prove optimality. The first strategy is a traditional one based on the dual solution obtained at the end of the CG for the original LP relaxation. The second and third strategies are more expensive and are based on dual solutions obtained by solving specialized path flow and arc flow LP models. At the root node, we apply the first and second strategies after solving the LP relaxation.

\framework{} uses the branching scheme of Section \ref{sec:branching}, which is particularly effective when the incumbent is still not optimal. The tree is limited to $K$ levels. At each level, the left branch is obtained by a large elimination of arcs and is directly solved as an arc flow model by a MILP solver. Although the aim is to provide relatively easier problems in the left branch, in some cases these problems can be small but still hard enough and consume most of the overall time limit. However, no additional stopping criterion is used to deal with such cases. In the right branch, the linear relaxation with an additional branching constraint is solved, followed by the application of the first RCVF strategy. The right branch is branched again in the first $K-1$ levels or solved as an arc flow model by a MILP solver in the last level. In this last node, we apply the third (and most expensive) RCVF strategy before invoking the MILP solver. The tree is explored by breadth-first search, by prioritizing left branches, and no parallelism is implemented.
\section{On the Solution of the Linear Relaxation} \label{sec:linear_relaxation}

In hard instances of strongly $\mathcal{NP}$-hard problems, the network underlying a DWM is usually huge, and efficient methods for the associated network flow models typically relies on CG algorithms.
In such algorithms, the LP relaxation with a restricted set of columns, called {\em restricted master problem} (RMP), is iteratively solved.  At each iteration, an oracle solves the pricing problem to generate non-basic columns with negative reduced cost. The algorithm halts if no such column can be found. For a deeper discussion, see, e.g.,  \cite{DDS06,LD05}.

This section describes the solution of the LP relaxation of path flow and arc flow models in \framework. We use column(-and-row) generation, and the dual plays a key role. For that, consider the dual LP relaxation of \eqref{eq:path_flow_of}--\eqref{eq:path_flow_domain}:
\begin{align}
\label{eq:path_flow_dual_objective}
\allowdisplaybreaks
\max ~& b^\intercal \beta, &\\
\label{eq:path_flow_dual_constraints}
\text{s.t.: } & {a_p}^\intercal\beta \leq c_p, & \forall p \in \pathset,\\
\label{eq:path_flow_dual_domain}
& \beta \in \mathbb{R}_+^m,  &
\end{align}
\noindent where $\beta$ are the dual variables associated with constraints \eqref{eq:path_flow_constraints}. 
The following is the dual LP relaxation of \eqref{eq:arc_flow_of}--\eqref{eq:arc_flow_domain1}:
\begin{align}
\label{eq:arc_flow_dual_objective}
\allowdisplaybreaks
\max~ & b^\intercal\beta, & \\
\label{eq:arc_flow_dual_constraints1}
\text{s.t.:~} & - \alpha_u + \alpha_v +  {a_{(u,v)}}^\intercal  \beta \leq c_{(u,v)}, & \forall (u,v) \in \arcset, \\
\label{eq:arc_flow_dual_constraints2}
& \alpha_{v^+} - \alpha_{v^-} \leq 0, \\
\label{eq:arc_flow_dual_domain1}
& \alpha_{u} \in \mathbb{R}, & \forall u \in \nodeset,\\
\label{eq:arc_flow_dual_domain2}
& \beta \in \mathbb{R}_+^m, &
\end{align}

\noindent where $\alpha$ and $\beta$ are the dual variables associated with flow conservation constraints \eqref{eq:arc_flow_conservation_constraints} and side constraints \eqref{eq:arc_flow_side_constraints}, respectively. 

\subsection{A Dual Correspondence} \label{sec:dual_correspondence}

\def\betaPF{{\overline\beta^{}}}
\def\betaAF{{\overline\beta^{}}}
\def\alfaAF{{\overline\alpha}}

Recently, de Lima et al. \cite{LACIV21} showed how arc flow models provide a richer description of the dual space when compared to path flow models, which results in less degeneracy in the LP solution. The proof is based on the fact that dual constraints of a path flow model can be obtained by aggregating dual constraints of the equivalent arc flow model. Here, we further exploit this fact to present a correspondence between the dual space of these models.
\begin{lemma} \label{lemma:beta_equivalence_1}
Given a feasible solution $\left\langle \alfaAF, \betaAF \right\rangle$ of \eqref{eq:arc_flow_dual_objective}--\eqref{eq:arc_flow_dual_domain2}, it holds that $\betaAF$ is also feasible for \eqref{eq:path_flow_dual_objective}--\eqref{eq:path_flow_dual_domain}.
\end{lemma}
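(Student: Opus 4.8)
The plan is to show that any dual-feasible solution of the arc flow model, when restricted to its $\beta$-component, satisfies all the constraints of the path flow dual. The objective functions of the two duals are identical (both are $b^\intercal \beta$) and the domain constraint $\beta \in \mathbb{R}_+^m$ is common to both, so the only thing requiring verification is that $\betaAF$ satisfies the path flow dual constraints \eqref{eq:path_flow_dual_constraints}, namely ${a_p}^\intercal \betaAF \leq c_p$ for every path $p \in \pathset$. The natural strategy, exactly as hinted by the authors' remark that ``dual constraints of a path flow model can be obtained by aggregating dual constraints of the equivalent arc flow model,'' is to fix an arbitrary path $p$ and sum the arc flow dual constraints \eqref{eq:arc_flow_dual_constraints1} over all arcs $(u,v) \in \arcset_p$.

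First I would fix a path $p \in \pathset$ and write down the inequality \eqref{eq:arc_flow_dual_constraints1} for each arc $(u,v) \in \arcset_p$, then add these inequalities together. On the right-hand side the sum telescopes into $\sum_{(u,v) \in \arcset_p} c_{(u,v)} = c_p$ by the definition of the path cost. On the left-hand side, the terms $\sum_{(u,v)\in\arcset_p}{a_{(u,v)}}^\intercal \betaAF$ collapse into ${a_p}^\intercal \betaAF$ by the definition $a_p = \sum_{(u,v)\in\arcset_p} a_{(u,v)}$. The remaining contribution is the telescoping sum of the node-potential terms $\sum_{(u,v)\in\arcset_p}(-\alpha_u + \alpha_v)$. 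Since $p$ is a path from $v^+$ to $v^-$, the interior node potentials cancel in pairs, leaving exactly $\alpha_{v^-} - \alpha_{v^+}$, i.e. $-(\alpha_{v^+} - \alpha_{v^-})$.

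Putting these three pieces together, the summed inequality reads
\begin{equation*}
(\alpha_{v^-} - \alpha_{v^+}) + {a_p}^\intercal \betaAF \leq c_p,
\end{equation*}
and invoking the extra arc flow dual constraint \eqref{eq:arc_flow_dual_constraints2}, which states $\alpha_{v^+} - \alpha_{v^-} \leq 0$, equivalently $\alpha_{v^-} - \alpha_{v^+} \geq 0$, lets me drop the nonnegative node-potential remainder to obtain ${a_p}^\intercal \betaAF \leq c_p$. This establishes feasibility of $\betaAF$ for the path flow dual constraint associated with $p$; since $p$ was arbitrary, it holds for all paths.

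I expect the only real subtlety to be the telescoping of the node potentials, where I must argue carefully that a path traverses a sequence of nodes $v^+ = u_0, u_1, \dots, u_k = v^-$ so that each internal potential $\alpha_{u_i}$ ($1 \leq i \leq k-1$) appears once with a $+$ sign (as the head of one arc) and once with a $-$ sign (as the tail of the next arc), leaving only the endpoint terms. The acyclicity of $\network$ guarantees that $p$ is a genuine simple sequence of distinct arcs with well-defined endpoints $v^+$ and $v^-$, so no node potential is over- or under-counted. The crucial role of constraint \eqref{eq:arc_flow_dual_constraints2} is precisely to absorb the leftover endpoint term $\alpha_{v^-}-\alpha_{v^+}$; without it, the aggregated inequality would be weaker than required, so I would make sure to highlight where it enters.
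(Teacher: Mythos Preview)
Your proposal is correct and follows essentially the same approach as the paper's proof: summing the arc flow dual constraints \eqref{eq:arc_flow_dual_constraints1} along the arcs of an arbitrary path, telescoping the node potentials to leave $\alfaAF_{v^-}-\alfaAF_{v^+}$, and then invoking \eqref{eq:arc_flow_dual_constraints2} to discard that nonnegative remainder. The paper presents the same chain of inequalities slightly more tersely, but the argument is identical.
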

\begin{proof}
The proof follows by showing that $\betaAF$ satisfies constraints \eqref{eq:path_flow_dual_constraints}, i.e., $a_p^\intercal \betaAF \leq c_p$, for all $p \in \pathset$. Since $\left\langle \alfaAF, \betaAF \right\rangle$ is dual-feasible for the arc flow model, then by constraints \eqref{eq:arc_flow_dual_constraints1} it holds that $\alfaAF_v - \alfaAF_u + {a_{(u,v)}}^\intercal\betaAF  \leq c_{(u,v)}$, for every arc $(u,v) \in \arcset$, and by constraints \eqref{eq:arc_flow_dual_constraints2} it holds that $\overline \alpha_{v^+} - \overline\alpha_{v^-} \leq 0$. Then, it follows that, for each $p \in \pathset$, $c_p = \sum_{(u,v) \in \arcset_p} c_{(u,v)} \geq \sum_{(u,v) \in \arcset_p}(\overline\alpha_v - \overline\alpha_u + {a_{(u,v)}}^\intercal \betaAF) = \overline\alpha_{v^-} - \overline\alpha_{v^+} + \sum_{(u,v) \in \arcset_p} {a_{(u,v)}}^\intercal \betaAF \geq \sum_{(u,v) \in \arcset_p} {a_{(u,v)}}^\intercal \betaAF = {a_{p}}^\intercal\betaAF $. 
\end{proof}
\begin{lemma} \label{lemma:beta_equivalence_2}
Given a feasible solution $\betaPF$ of \eqref{eq:path_flow_dual_objective}--\eqref{eq:path_flow_dual_domain}, there exist $\alfaAF \in \mathbb{R}^{|\nodeset|}$ such that $\left\langle \alfaAF, \betaPF \right\rangle$ is feasible for \eqref{eq:arc_flow_dual_objective}--\eqref{eq:arc_flow_dual_domain2}.
\end{lemma}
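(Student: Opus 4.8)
The plan is to treat the missing $\alfaAF$ as a vector of node potentials and to recognize the arc-flow dual constraints \eqref{eq:arc_flow_dual_constraints1} as a system of difference constraints, whose feasibility is governed by shortest paths in an acyclic network. Since $\betaPF \ge 0$ is inherited directly, constraint \eqref{eq:arc_flow_dual_domain2} holds for free, so the whole task reduces to constructing $\alfaAF \in \mathbb{R}^{|\nodeset|}$ satisfying \eqref{eq:arc_flow_dual_constraints1} and \eqref{eq:arc_flow_dual_constraints2}. To this end I would introduce the reduced arc lengths $\tilde c_{(u,v)} := c_{(u,v)} - {a_{(u,v)}}^\intercal \betaPF$ for every $(u,v) \in \arcset$, so that \eqref{eq:arc_flow_dual_constraints1} reads exactly $\alpha_v - \alpha_u \le \tilde c_{(u,v)}$.

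First I would define the potentials explicitly as shortest-path distances. Assuming without loss of generality that every node lies on some path from $v^+$ to $v^-$ (isolated or useless nodes can be deleted without affecting either model), set $\alfaAF_{v^+} := 0$ and let $\alfaAF_u$ be the minimum $\tilde c$-length of a path from $v^+$ to $u$. Because $\network$ is acyclic, these minima are attained and finite even though the $\tilde c_{(u,v)}$ may be negative, so $\alfaAF$ is well defined. The shortest-path optimality condition (triangle inequality) then gives $\alfaAF_v \le \alfaAF_u + \tilde c_{(u,v)}$ for every arc $(u,v)$, which is precisely \eqref{eq:arc_flow_dual_constraints1}.

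It remains to check \eqref{eq:arc_flow_dual_constraints2}, namely $\alfaAF_{v^+} \le \alfaAF_{v^-}$. By construction $\alfaAF_{v^-}$ equals the minimum over all $p \in \pathset$ of $\sum_{(u,v)\in\arcset_p} \tilde c_{(u,v)} = c_p - {a_p}^\intercal \betaPF$, and each of these quantities is nonnegative since $\betaPF$ satisfies the path-flow dual constraints \eqref{eq:path_flow_dual_constraints}. Hence $\alfaAF_{v^-} \ge 0 = \alfaAF_{v^+}$, establishing \eqref{eq:arc_flow_dual_constraints2} and completing the verification that $\left\langle \alfaAF, \betaPF\right\rangle$ is arc-flow dual feasible.

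The only genuinely delicate point is the interaction between the ``open'' constraint \eqref{eq:arc_flow_dual_constraints2} and the existence of finite potentials. The cleanest way to see that nothing goes wrong, and to avoid the reduction above, is the difference-constraint viewpoint: augment $\network$ with a single back-arc $(v^-,v^+)$ of length $0$, so that the full system \eqref{eq:arc_flow_dual_constraints1}--\eqref{eq:arc_flow_dual_constraints2} becomes the potential system of this augmented graph. By acyclicity of $\network$, every directed cycle must traverse $(v^-,v^+)$ exactly once and therefore corresponds to a $v^+$-$v^-$ path $p$ of length $c_p - {a_p}^\intercal \betaPF \ge 0$; thus the augmented graph has no negative-length cycle, and the standard feasibility theorem for difference constraints yields the desired $\alfaAF$. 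I expect this nonnegative-cycle argument to be the crux, as it is exactly where path-flow dual feasibility \eqref{eq:path_flow_dual_constraints} is consumed.
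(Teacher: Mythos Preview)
Your proposal is correct and follows essentially the same approach as the paper: the paper also sets $\alfaAF_{v^+}=0$ and defines $\alfaAF_v$ as the shortest-path distance from $v^+$ to $v$ under arc lengths $c_{(u,v)} - a_{(u,v)}^\intercal\betaPF$, then verifies \eqref{eq:arc_flow_dual_constraints1} via the triangle inequality and \eqref{eq:arc_flow_dual_constraints2} by observing that $\alfaAF_{v^-}=\min_{p\in\pathset}(c_p - a_p^\intercal\betaPF)\ge 0$. Your additional back-arc / negative-cycle remark is a nice complementary observation but not needed, and the paper does not include it.
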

\begin{proof}
Let $\alfaAF$ be defined by the following recursion:
\begin{align}
\label{eq:lemma_beta_equivalence_2_recursion}
\alfaAF_v = \begin{cases} \min \{ \alfaAF_u + (c_{(u,v)}-{a_{(u,v)}}^\intercal\betaPF) : (u,v) \in \arcset \}, & \text{ if } v \neq v^+, \\ 0, & \text{ if } v = v^+. \end{cases} 
\end{align}
In this definition, $\alfaAF_v$ represents the cost of a shortest path from $v^+$ to $v$, where the cost of each arc $(u,v) \in \arcset$ is given by $(c_{(u,v)}-{a_{(u,v)}}^\intercal\betaPF)$.
The proof follows by showing that $\alfaAF$ and $\betaPF$ satisfy the dual constraints \eqref{eq:arc_flow_dual_constraints1} and \eqref{eq:arc_flow_dual_constraints2}. Constraints \eqref{eq:arc_flow_dual_constraints1} are satisfied, for every arc $(u,v) \in \arcset$, because, by the definition of $\alfaAF_v$, it holds that $\alfaAF_v \leq \alfaAF_u + (c_{(u,v)}-{a_{(u,v)}}^\intercal\betaPF)$, which directly implies that $\alfaAF_v - \alfaAF_u + {a_{(u,v)}}^\intercal\betaPF  \leq c_{(u,v)}$. To show that constraint \eqref{eq:arc_flow_dual_constraints2} is satisfied, we first highlight that, since $\alfaAF_{v^-}$ corresponds to the cost of a shortest path from $v^+$ to $v^-$, then $ \alfaAF_{v^-} = \min \{ c_p - {a_p}^\intercal\betaPF : p \in \pathset\}$. Since $\betaPF$ is dual-feasible for the path flow model, then it follows by constraints \eqref{eq:path_flow_dual_constraints} that $c_p - {a_p}^\intercal\betaPF \geq 0$, for every $p \in \pathset$, which directly implies that $\alfaAF_{v^-} \geq 0$. Then, since $\alfaAF_{v^+} = 0$ and $\alfaAF_{v^-} \geq 0$, it follows that $\alfaAF_{v^+}-\alfaAF_{v^-} \leq 0$.
\end{proof}
By combining Lemmas \ref{lemma:beta_equivalence_1} and \ref{lemma:beta_equivalence_2}, the following result is derived:
\begin{theorem}[Dual Correspondence] \label{theorem:beta_equivalence}
There is a one-to-many correspondence between the solution space of \eqref{eq:path_flow_dual_objective}--\eqref{eq:path_flow_dual_domain} and \eqref{eq:arc_flow_dual_objective}--\eqref{eq:arc_flow_dual_domain2}, respectively, in which variables $\beta$ preserve the same solution on both sides of the mapping. 
\end{theorem}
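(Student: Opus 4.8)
The plan is to read the two lemmas as the two defining properties of a single projection map and then package them into the stated correspondence. Concretely, I would consider the projection $\Pi$ that sends an arc flow dual solution $\left\langle \alfaAF, \betaAF \right\rangle$ of \eqref{eq:arc_flow_dual_objective}--\eqref{eq:arc_flow_dual_domain2} to its $\beta$-component, i.e.\ $\Pi\!\left(\left\langle \alfaAF, \betaAF \right\rangle\right) = \betaAF$. The whole argument then reduces to establishing that $\Pi$ is a well-defined surjection from the feasible region of the arc flow dual onto the feasible region of the path flow dual, and that the fibers $\Pi^{-1}(\beta)$ are nonempty; the \emph{one-to-many} correspondence is exactly this fiber structure.

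First I would invoke Lemma~\ref{lemma:beta_equivalence_1} to guarantee that $\Pi$ is well defined, since it states precisely that the $\beta$-component of any arc flow dual-feasible point is itself path flow dual-feasible; hence $\Pi$ indeed lands inside the feasible region of \eqref{eq:path_flow_dual_objective}--\eqref{eq:path_flow_dual_domain}. Then I would invoke Lemma~\ref{lemma:beta_equivalence_2} to obtain surjectivity: given any path flow dual-feasible $\betaPF$, the lemma constructs an explicit $\alfaAF$ (the shortest-path potentials of recursion \eqref{eq:lemma_beta_equivalence_2_recursion}) for which $\left\langle \alfaAF, \betaPF \right\rangle$ is arc flow dual-feasible and projects back to $\betaPF$. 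This simultaneously shows that every fiber $\Pi^{-1}(\beta)$ is nonempty. Finally, since $\Pi$ acts as the identity on the $\beta$-coordinate, the component $\beta$ is by construction preserved on both sides of the mapping, which is the last assertion of the statement.

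Because both lemmas already carry the technical weight, I do not expect a genuine obstacle here; the only point requiring care is to phrase the conclusion so that it matches the intended meaning of a one-to-many correspondence. In particular, I would make explicit that the association is between a single path flow dual solution $\beta$ and the entire (possibly non-singleton) set of arc flow dual solutions sharing that $\beta$, rather than a bijection. The ``many'' side is substantiated by observing that the $\alfaAF$ delivered by \eqref{eq:lemma_beta_equivalence_2_recursion} need not be the unique completion of $\betaPF$: any node potentials satisfying \eqref{eq:arc_flow_dual_constraints1}--\eqref{eq:arc_flow_dual_constraints2} give another preimage, so a fiber typically contains more than one point. Thus the correspondence is genuinely one-to-many, and the theorem follows by combining the two lemmas through $\Pi$.
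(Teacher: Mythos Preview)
Your proposal is correct and follows essentially the same approach as the paper, which simply states that the theorem is obtained by combining Lemmas~\ref{lemma:beta_equivalence_1} and~\ref{lemma:beta_equivalence_2}. Your formalization via the projection $\Pi$ and its fibers is a faithful (and slightly more explicit) packaging of exactly that combination.
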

Theorem \ref{theorem:beta_equivalence} guarantees that dual solutions $\overline\beta$ of either an arc flow or a path flow model may be used without distinction in the methods presented next.

\subsection{Computing the Minimum Reduced Cost of Arcs} \label{sec:minimum_rc_arcs}
Let us define the minimum reduced cost of an arc as follows:
\begin{definition}
Given a dual solution $\overline\beta \in \mathbb{R}_+^m$, the {\em minimum reduced cost of an arc} $(u,v) \in \arcset$ is the minimum reduced cost of a path among all paths that contain $(u,v)$, and is given by $\overline c_{(u,v)} = \min \{ c_p - {a_p}^\intercal \overline\beta  : p \in \pathset_{(u,v)} \}$.
\end{definition}

The minimum reduced costs of the arcs are used in the oracle presented in Section \ref{sec:column_generation} and in the RCVF strategies presented in Section \ref{sec:variable_fixing}.
Irnich et al. \cite{IDDH10} showed how to efficiently compute $\overline c_{(u,v)}$, for all $(u,v) \in \arcset$, by bidirectional Dynamic Programming (DP). First, one has to compute the values $\overline c^+_v$ and $\overline c^-_v$, for each node $v \in \nodeset$, which represent the minimum reduced cost associated with all paths from $v^+$ to $v$ and from $v$ to $v^-$, respectively. The fact that $\network$ is acyclic allows one to compute $\overline c^+_v$ and $\overline c^-_v$, for each $v \in \nodeset$, by means of the following recursions:
\begin{align}
\nonumber
\overline c^+_v =& \begin{cases} \min \{ \overline c^+_u + (c_{(u,v)}-{a_{(u,v)}}^\intercal \overline\beta) : (u,v) \in \arcset \}, & \text{ if } v \neq v^+, \\ 0, & \text{ if } v = v^+, \end{cases} \\
\nonumber
\overline c^-_v =& \begin{cases} \min \{ \overline c^-_w + (c_{(v,w)}-{a_{(v,w)}}^\intercal \overline\beta) : (v,w) \in \arcset \}, & \text{ if } v \neq v^-, \\ 0, & \text{ if } v = v^-. \end{cases}
\end{align}

Efficient implementations of a labelling DP algorithm based on the two recursions allows us to simultaneously compute $\overline c^+_v$ and $\overline c^-_v$, for all $v \in \nodeset$, in $O(\arcset)$ time complexity. Then, the minimum reduced cost of an arc ${(u,v)}$ is simply given by $\overline c_{(u,v)} = \overline c^+_u + (c_{(u,v)}-{a_{(u,v)}}^\intercal \overline\beta) + \overline c^-_v$. The recursive structure also allows us to efficiently retrieve a path associated with $\overline c_{(u,v)}$ after the execution of the DP algorithm.

\subsection{Path-Based Pricing in Arc Flow} \label{sec:arc_flow_pricing}

In CG algorithms for \eqref{eq:path_flow_of}--\eqref{eq:path_flow_domain}, given a dual solution $\overline\beta$, the pricing problem
\begin{align}
\label{eq:pricing_network_flow}
\min \{ c_p - {a_p}^\intercal \overline\beta  : p \in \pathset \}
\end{align}

\noindent can be solved as a shortest path problem on $\network$, with arc costs $c_{(u,v)} - {a_{(u,v)}}^\intercal \overline\beta$, for $(u,v) \in \arcset$. As networks from a DWM are acyclic, a shortest path can be  found in $O(|\arcset|)$ by a topological ordering of the nodes (see, e.g., \cite{AMO93}).
In arc flow models, to search for a negative reduced cost variable, one can solve
\begin{align}
\label{eq:pricing_arc_flow}
\min \{ c_{(u,v)} - (-\overline\alpha_u+\overline\alpha_v+ {a_{(u,v)}}^\intercal\overline\beta)  : (u,v) \in \arcset \},
\end{align}
which generates a single arc. In practice, however, generating complete paths (each to be decomposed in a set of arcs) may significantly improve convergence time (see \cite{V99}). In what follows, we formalize the correctness of this approach.
\begin{proposition} \label{proposition:arc_flow_pricing}
Let $\overline\alpha \in \mathbb{R}^{|\nodeset|}$ and $\overline\beta \in \mathbb{R}_+^m$ be a solution of \eqref{eq:arc_flow_dual_objective}--\eqref{eq:arc_flow_dual_domain2}, possibly infeasible but satisfying \eqref{eq:arc_flow_dual_constraints2}. Then, any path $p \in \pathset$ with negative reduced cost contains at least one arc $(u,v) \in \arcset_p$ with negative reduced cost.
\end{proposition}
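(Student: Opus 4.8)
The plan is to show that the (path) reduced cost of $p$ decomposes as the sum of the arc reduced costs along $p$ plus a boundary term in $\overline\alpha$ that is guaranteed to be nonpositive; strict negativity of the path reduced cost then forces the sum of arc reduced costs to be strictly negative, so at least one of them must itself be negative. Following the quantity minimized in \eqref{eq:pricing_arc_flow}, I would first define the reduced cost of an arc under the (possibly infeasible) dual solution $\left\langle \overline\alpha, \overline\beta \right\rangle$ as $\overline r_{(u,v)} = c_{(u,v)} + \overline\alpha_u - \overline\alpha_v - {a_{(u,v)}}^\intercal \overline\beta$, and the reduced cost of a path as $c_p - {a_p}^\intercal \overline\beta$, which is the expression appearing in \eqref{eq:pricing_network_flow}.

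Next I would sum $\overline r_{(u,v)}$ over all arcs of $p$. Using $c_p = \sum_{(u,v)\in\arcset_p} c_{(u,v)}$ and $a_p = \sum_{(u,v)\in\arcset_p} a_{(u,v)}$, the cost and the $\overline\beta$ terms reassemble into $c_p - {a_p}^\intercal \overline\beta$. The crucial observation is that the $\overline\alpha$ contributions telescope along the path: since every $p \in \pathset$ runs from $v^+$ to $v^-$, each intermediate node is the head of one arc and the tail of the next, so $\sum_{(u,v)\in\arcset_p}(\overline\alpha_u - \overline\alpha_v)$ collapses to $\overline\alpha_{v^+} - \overline\alpha_{v^-}$. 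This gives
$$\sum_{(u,v)\in\arcset_p} \overline r_{(u,v)} = \left( c_p - {a_p}^\intercal \overline\beta \right) + \left( \overline\alpha_{v^+} - \overline\alpha_{v^-} \right).$$

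To conclude, by hypothesis the path reduced cost $c_p - {a_p}^\intercal \overline\beta$ is strictly negative, and by constraint \eqref{eq:arc_flow_dual_constraints2} the boundary term satisfies $\overline\alpha_{v^+} - \overline\alpha_{v^-} \leq 0$; hence the sum $\sum_{(u,v)\in\arcset_p} \overline r_{(u,v)}$ is strictly negative. A finite sum of real numbers that is strictly negative must contain at least one strictly negative summand, so there exists $(u,v) \in \arcset_p$ with $\overline r_{(u,v)} < 0$, i.e., an arc of negative reduced cost, as claimed.

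The argument is elementary, and I do not expect a genuine obstacle; the only points needing care are verifying the telescoping uses the path structure correctly (endpoints exactly $v^+$ and $v^-$, each interior node cancelling once), and—more importantly for the intended use—being explicit that the proof invokes only \eqref{eq:arc_flow_dual_constraints2} and never the arc dual-feasibility constraints \eqref{eq:arc_flow_dual_constraints1}. This is precisely what keeps the statement valid under the hypothesis that $\left\langle \overline\alpha, \overline\beta \right\rangle$ may be infeasible, and hence what justifies generating complete paths in the arc flow pricing even when the current $\overline\alpha$ violates some arc constraints.
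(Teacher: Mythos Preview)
Your proof is correct and follows essentially the same approach as the paper: sum the arc reduced costs along $p$, use the telescoping of the $\overline\alpha$ terms to obtain $(c_p - a_p^\intercal\overline\beta) + (\overline\alpha_{v^+}-\overline\alpha_{v^-})$, and conclude from the negativity of the first term and \eqref{eq:arc_flow_dual_constraints2} that some arc must have negative reduced cost. Your write-up is in fact slightly more careful than the paper's, which writes $(\overline\alpha_{v^+}-\overline\alpha_{v^-}) < 0$ where only $\leq 0$ is warranted; your version handles this correctly.
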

\begin{proof}
Let $p \in \pathset$ be a path with reduced cost $c_p - {a_p}^\intercal \overline\beta < 0$. We prove that at least one arc in $\arcset_p$ has negative reduced cost by showing that the sum of the reduced costs of arcs in $\arcset_p$, given by $\sum_{(u,v) \in \arcset_p } (c_{(u,v)} - (-\overline\alpha_u+\overline\alpha_v+ {a_{(u,v)}}^\intercal\overline\beta))$, is negative. By properly canceling $\overline\alpha$ terms from intermediate nodes in $p$, the sum of reduced costs is equal to $\sum_{(u,v) \in \arcset_p } (c_{(u,v)} - {a_{(u,v)}}^\intercal\overline\beta) + (\overline\alpha_{v^+}-\overline\alpha_{v^-}) = (c_p - a_p^\intercal\overline\beta) + (\overline\alpha_{v^+}-\overline\alpha_{v^-})$. Since we suppose that $c_p - {a_p}^\intercal \overline\beta < 0$ and, by \eqref{eq:arc_flow_dual_constraints2}, $(\overline\alpha_{v^+}-\overline\alpha_{v^-}) < 0$, it follows that the sum of reduced costs of arcs in $\arcset_p$ is negative, and, therefore, at least one arc in $\arcset_p$ has negative reduced cost.
\end{proof}
Proposition \ref{proposition:arc_flow_pricing} ensures that in the solution of an arc flow model relaxation by CG, if the pricing problem is solved as \eqref{eq:pricing_network_flow} and all arcs in the resulting path are introduced in the RMP, then the CG algorithm only halts when all paths in $\pathset$ have non-negative reduced cost. Hence, the $\overline\beta$ solution at the end of the CG algorithm is feasible for \eqref{eq:path_flow_dual_objective}--\eqref{eq:path_flow_dual_domain}, and, by relying on Theorem \ref{theorem:beta_equivalence}, there exists an $\overline\alpha$, computed by \eqref{eq:lemma_beta_equivalence_2_recursion}, such that $\left\langle \overline\alpha, \overline\beta \right\rangle$ is feasible for \eqref{eq:arc_flow_dual_objective}--\eqref{eq:arc_flow_dual_domain2}.

\subsection{Column(-and-Row) Generation Algorithm} \label{sec:column_generation}

\framework{} adopts a CG algorithm for path flow models and a column-and-row generation algorithm for arc flow models. For path flow, all rows are included in the RMP from the beginning. For arc flow, instead, we generate flow conservation constraints on demand, as proposed in \cite{V99}, by adding a constraint \eqref{eq:arc_flow_conservation_constraints} only when the corresponding vertex $v$ first appears as head or tail of an arc associated with an RMP variable.

Both algorithms include in the initial RMP base artificial variables with sufficiently high objective cost, so as to avoid infeasibility. In addition, they reuse the optimal basis of previous LP relaxations, when available.

In many problems, generating a single column per pricing iteration may lead to a slow convergence of the algorithm. In network flow models, it may be preferable to generate multiple paths at each iteration. Therefore, we implemented an oracle that generates multiple paths with negative reduced cost (if any exists), and, by relying on the discussion in Section \ref{sec:arc_flow_pricing}, can be used to solve the pricing of both path flow and arc flow models.

The oracle that we developed begins by computing $\overline c_{(u,v)}$, for each $(u,v) \in \arcset$, by using the method described in Section \ref{sec:minimum_rc_arcs}. Then, for each row $k=1,\ldots,m$ in \eqref{eq:path_flow_constraints} or in \eqref{eq:arc_flow_side_constraints}, the algorithm selects an arc $(u,v)$ with minimum $\overline c_{(u,v)}$ among the arcs that cover  $k$ (i.e., that have non-zero coefficient on row $k$), and, by using the DP structure, it generates a path in $\pathset_{(u,v)}$ of minimum reduced cost. 
At the end, the oracle has generated, for each row $k$, a path of minimum reduced cost that covers $k$. For instance, the algorithm generates the columns of minimum reduced cost, such that each item (in packing problems) or each client (in routing problems) is covered by at least one of such columns.
Notice that different rows may lead to the same column, so repeated columns are discarded.
The overall algorithm has $O(m\zeta + |\arcset|)$ time complexity, where $\zeta$ is the maximum length of a path, and it is preferable to use it when the matrix of \eqref{eq:path_flow_constraints} is sparse, so that more different columns are generated. 

\subsection{Dealing with Dual Infeasibility} \label{sec:safe_dual}
To apply RCVF (Section \ref{sec:variable_fixing}) and to prune nodes by a dual bound in the search of an integer optimal solution, a dual-feasible solution is needed. Most state-of-the-art LP solvers work within the limited precision of floating-point arithmetic and produce solutions that are feasible within a small margin of error. In particular, the LP solver that we use {(Gurobi 9.1.1)} assumes a dual solution of a minimization problem to be feasible if its minimum reduced cost is greater than $-\epsilon$, where $\epsilon$ is at minimum $10^{-9}$. Then, when the minimum reduced cost lies between $-10^{-9}$ and $0$, the dual solution is still infeasible but cannot be improved by CG because the solver may not include new columns in the RMP base. This issue could be solved with the use of an LP solver with exact precision, but at the cost of a significant efficiency loss. Instead, we implemented a method that attempts transforming a dual-infeasible solution (with a minimum reduced cost between $-10^{-9}$ and $0$) into a feasible one.

A constraint in \eqref{eq:path_flow_constraints} is defined as a {\em covering} constraint if all of its left-hand-side coefficients are non-negative and as a {\em packing} constraint if all of its left-hand-side coefficients are non-positive.
The following result shows how to efficiently derive dual-feasible solutions for a large class of models:
\begin{proposition} \label{proposition:dual_transformation_1}
Consider a path flow model \eqref{eq:path_flow_of}-\eqref{eq:path_flow_domain} where the first $1,\ldots,m'$ constraints are covering constraints and the last $m'+1,\ldots,m$ constraints are packing constraints. Let $\epsilon > 0$, and $\overline \beta \in \mathbb{R}^m_+$ be an (infeasible) dual solution with minimum reduced cost of a path between $-\epsilon$ and $0$. If $c_p$ is a multiple of $\epsilon$, for each $p \in \pathset$, then, $\overline \beta' = (\epsilon \lfloor \epsilon^{-1} \overline\beta_1 \rfloor, \ldots, \epsilon \lfloor \epsilon^{-1} \overline\beta_{m'} \rfloor, \epsilon \lceil \epsilon^{-1} \overline\beta_{m'+1} \rceil,\ldots, \epsilon \lceil \epsilon^{-1}\overline\beta_m \rceil)$ is a dual-feasible solution.
\end{proposition}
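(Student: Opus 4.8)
The plan is to verify directly the two requirements for $\overline\beta'$ to be feasible for \eqref{eq:path_flow_dual_objective}--\eqref{eq:path_flow_dual_domain}: non-negativity, $\overline\beta' \in \mathbb{R}^m_+$, and the dual constraints \eqref{eq:path_flow_dual_constraints}, i.e., $a_p^\intercal \overline\beta' \le c_p$ for every $p \in \pathset$. The rounding is engineered so that two things happen simultaneously: (a) the linear form $a_p^\intercal(\cdot)$ never increases when we pass from $\overline\beta$ to $\overline\beta'$, and (b) the value $a_p^\intercal\overline\beta'$ lands on the grid $\epsilon\mathbb{Z}$. Together with the near-feasibility hypothesis $c_p - a_p^\intercal\overline\beta > -\epsilon$, these two facts let us absorb the residual $\epsilon$-slack and recover exact feasibility.

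Non-negativity is immediate: for a covering index $k \le m'$ we round a non-negative number down, so $\overline\beta'_k = \epsilon\lfloor\epsilon^{-1}\overline\beta_k\rfloor \ge 0$; for a packing index $k > m'$ we round up, so $\overline\beta'_k = \epsilon\lceil\epsilon^{-1}\overline\beta_k\rceil \ge \overline\beta_k \ge 0$. For the monotonicity claim (a), I would fix $p$ and compare componentwise: on covering rows the path coefficients satisfy $a_{p,k}\ge 0$ while $\overline\beta'_k \le \overline\beta_k$ (floor), so $a_{p,k}\overline\beta'_k \le a_{p,k}\overline\beta_k$; on packing rows $a_{p,k}\le 0$ while $\overline\beta'_k \ge \overline\beta_k$ (ceiling), so again $a_{p,k}\overline\beta'_k \le a_{p,k}\overline\beta_k$. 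Summing over $k$ yields $a_p^\intercal\overline\beta' \le a_p^\intercal\overline\beta$. For the grid claim (b), recall from Section \ref{sec:foundations} that $a_p = \sum_{(u,v)\in\arcset_p} a_{(u,v)} \in \mathbb{Z}^m$; since each $\overline\beta'_k$ is an integer multiple of $\epsilon$, the product $a_p^\intercal\overline\beta' = \epsilon\sum_k a_{p,k}(\epsilon^{-1}\overline\beta'_k)$ equals $\epsilon$ times an integer, hence is a multiple of $\epsilon$.

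It then remains to close the gap. The hypothesis that the minimum reduced cost lies strictly above $-\epsilon$ gives $a_p^\intercal\overline\beta < c_p + \epsilon$, and combining with claim (a) yields $a_p^\intercal\overline\beta' < c_p + \epsilon$. Writing $a_p^\intercal\overline\beta' = \epsilon s$ and $c_p = \epsilon t$ with $s,t \in \mathbb{Z}$ (by claim (b) and the hypothesis on $c_p$), the strict inequality $\epsilon s < \epsilon(t+1)$ forces $s \le t$, i.e., $a_p^\intercal\overline\beta' \le c_p$, which is \eqref{eq:path_flow_dual_constraints}; together with non-negativity this establishes feasibility. The main obstacle is precisely this last upgrade from the weak bound $a_p^\intercal\overline\beta' < c_p + \epsilon$ to the exact constraint $a_p^\intercal\overline\beta' \le c_p$: it hinges on both quantities lying on a common $\epsilon$-grid and on the inequality being \emph{strict}. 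The strictness is essential and is exactly what the solver tolerance supplies, since a reduced cost equal to $-\epsilon$ on a path whose duals are already on the grid would leave $a_p^\intercal\overline\beta' = c_p + \epsilon$ and break feasibility; so the delicate points are establishing integrality of $a_p$ for claim (b) and carefully tracking that strictness.
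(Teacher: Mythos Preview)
Your proof is correct and follows essentially the same route as the paper's: both establish that passing from $\overline\beta$ to $\overline\beta'$ cannot decrease the reduced cost (your claim (a)), that the new reduced cost lies on the $\epsilon$-grid because $a_p$ is integer (your claim (b)), and then use the strict bound $>-\epsilon$ together with the grid property to upgrade to $\ge 0$. You additionally make the non-negativity check $\overline\beta'\in\mathbb{R}^m_+$ explicit, which the paper omits but which is indeed required for feasibility in \eqref{eq:path_flow_dual_domain}.
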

\begin{proof}
Let $p\in \pathset$ be an arbitrary path and $\overline c_p(\overline\beta)$ and $\overline c_p(\overline\beta')$ be the reduced cost of $p$ associated with the dual solutions $\overline\beta$ and $\overline\beta'$, respectively.  We prove that $\overline\beta'$ is dual-feasible by showing that $\overline c_p(\overline\beta') \geq 0$.
By considering that $a_{pk} \geq 0$ and $\overline\beta'_k \leq \overline\beta_k$, for each $k=1,\ldots,m'$, and that $a_{pk} \leq 0$ and $\overline\beta'_k \geq \overline\beta_k$, for each $k=m'+1,\ldots,m$, it is easy to check that $\overline c_p(\overline\beta') \geq \overline c_p(\overline\beta)$, which directly implies that $\overline c_p(\overline\beta') > -\epsilon$, since $\overline c_p(\overline\beta) > -\epsilon$. Now, considering that $c_p$ is a multiple of $\epsilon$ and that $a_{pk}$ is integer, for all $k=1,\ldots,m$, we have that $\overline c_p(\overline\beta')$ is also a multiple of $\epsilon$. Finally, the fact that $\overline c_p(\overline\beta')$ is a multiple of $\epsilon$ and is strictly greater than $-\epsilon$ implies that $\overline c_p(\overline\beta') \geq 0$.
\end{proof}
Proposition \ref{proposition:dual_transformation_1} is a generalization of the technique by Held et al. \cite{HCS12} to obtain safe dual-bounds for the vertex coloring problem. We consider $\epsilon = 10^{-9}$, which is always a divisor of the integer coefficients $c_p$. Hence, in our algorithm, the proposition can always be applied to models where constraints are only of covering and/or packing type. By relying on Theorem \ref{theorem:beta_equivalence}, Proposition \ref{proposition:dual_transformation_1} can also be applied in the case of arc flow models. 

When \eqref{eq:path_flow_constraints} has additional constraints which are not covering nor packing, Proposition \ref{proposition:dual_transformation_1} cannot be applied, and we use a heuristic to try to obtain a dual-feasible solution by treating these additional constraints as if they were of covering type, and then applying the proposition to obtain $\beta'$. In theory, the heuristic procedure may fail to obtain a dual-feasible solution, and in such a case \framework{} would continue the optimization without the ability to perform RCVF or prune the current node by dual bound. In practice, in our computational experiments the only model which did not follow the hypothesis of Proposition \ref{proposition:dual_transformation_1} is the model for the two-stage guillotine cutting stock problem (section \ref{sec:applications_2gcsp}), but for this problem the heuristic procedure was always sufficient to provide a dual-feasible solution at the end of the CG.

\section{Variable-Fixing Based on Reduced Costs} \label{sec:variable_fixing}

\def\dualbound{$b^\intercal\overline\beta$}
\def\rdualbound{$\lceil b^\intercal\overline\beta \rceil$}

Reduced-cost variable-fixing is a well-known technique used to filter the domain of integer variables in general MILP models (see, e.g., \cite{HMS06} and page 389 of \cite{NW88}). 
Irnich et al. \cite{IDDH10} showed how the dual solution of a path flow model can be used in an RCVF algorithm to remove arcs.
The overall idea is:
\begin{rcvf_algorithm}
Given a primal bound $z_{ub}$ and a dual-feasible solution $\overline\beta$ of objective value $z_{lb}=$ \dualbound, first compute the minimum reduced cost $\overline c_{(u,v)}$ associated with each arc $(u,v) \in \arcset$ (e.g., as described in Section \ref{sec:minimum_rc_arcs}), and then remove from $\network$ every arc $(u,v) \in \arcset$ such that $\overline c_{(u,v)} > z_{ub}-z_{lb}-1$. 
\end{rcvf_algorithm}

\noindent An equivalent approach has been implemented by Bergman et al. \cite{BCH15} in the general context of Lagrangian bounds for multivalued decision diagrams.

Different dual solutions may correspond to different reduced costs, and this directly affects the effectiveness of the RCVF.
For instance, let $\theta_{(u,v)} = c_{(u,v)} - (-\alpha_u+\alpha_v+ {a_{(u,v)}}^\intercal\beta)$ denote the reduced cost of $(u,v) \in \arcset$ in model \eqref{eq:arc_flow_dual_objective}--\eqref{eq:arc_flow_dual_domain2}.
A dual solution which allows the removal of $(u,v)$ by RCVF is one that satisfies $b^\intercal \beta + \theta_{(u,v)} \geq z_{ub}$. If such a solution exists, it can be obtained by solving  the dual arc flow model with a modified objective:
\begin{align}
\label{eq:arc_flow_dual_rcvf_opt}
\max \{ b^\intercal\beta + \theta_{(u,v)} : \eqref{eq:arc_flow_dual_objective}, \eqref{eq:arc_flow_dual_constraints1}, \eqref{eq:arc_flow_dual_constraints2}, \eqref{eq:arc_flow_dual_domain1}, \eqref{eq:arc_flow_dual_domain2} \}.
\end{align}

\noindent However, optimizing a model tailored for each arc can be very time consuming in practice. In the general context of non-convex mixed-integer nonlinear programming, a similar concern has been addressed in optimization-based bound tightening (see, e.g., \cite{GBMW17}). In the particular context of MILP, Bajgiran et al. \cite{BCR17} proposed a method to maximize the number of variables fixed by RCVF, by solving a single MILP model derived from an extension of the dual linear relaxation of the original model. This MILP model has an additional binary variable for each original variable, indicating whether the resulting dual solution is able to eliminate the associated original variable by RCVF. In our case, this method is generally impractical, because we are concerned with models with a huge number of variables that must rely on CG.

Here, we propose two strategies to search for dual solutions that may
improve the RCVF effectiveness, motivated by:
\begin{remark} \label{remark:rcvf_basis_violation}
The removal of arcs with a positive primal value in the current optimal basis leads to a basis-violation, which strengthens the linear relaxation.
\end{remark}

In practice, this strengthening usually increases the effectiveness of further variable-fixing.
However, as shown in the following section, such arc-removal by RCVF can only be obtained by relying on sub-optimal dual solutions.

\subsection{Sub-Optimal Dual Solutions in Variable-Fixing}
Sub-optimal dual solutions often produce a greater effectiveness in RCVF. This counter-intuitive fact was already observed almost two decades ago, for instance, by Sellmann \cite{S04} in the context of constraint programming-based Lagrangian relaxations. Here we extend this observation by the following:
\begin{proposition} \label{theorem:suboptimal_rcvf}
Let $(u,v) \in \arcset$ be any arc with $\overline\varphi_{(u,v)} > 0$ in a given primal-optimal solution $\overline\varphi$.
Then, there does not exist any dual-optimal solution $\overline\beta$ such that $(u,v)$ can be removed by the variable-fixing algorithm based on $\overline\beta$ and a primal bound $z_{ub} \geq$ \rdualbound $+1$.
\end{proposition}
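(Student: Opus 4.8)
The plan is to argue by complementary slackness. The claim is that an arc $(u,v)$ carrying positive flow in some primal-optimal solution cannot be fixed by the variable-fixing algorithm when the dual solution $\overline\beta$ used is itself dual-\emph{optimal} and the primal bound satisfies $z_{ub} \geq \lceil b^\intercal\overline\beta \rceil + 1$. Recall that the variable-fixing algorithm removes $(u,v)$ precisely when its minimum reduced cost satisfies $\overline c_{(u,v)} > z_{ub} - z_{lb} - 1$, where $z_{lb} = b^\intercal\overline\beta$. So I must show that for a dual-optimal $\overline\beta$, the arc $(u,v)$ always fails this test, i.e. $\overline c_{(u,v)} \leq z_{ub} - z_{lb} - 1$.

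First I would fix a dual-optimal $\overline\beta$ and, via Theorem \ref{theorem:beta_equivalence}, lift it to a dual-optimal pair $\langle \overline\alpha, \overline\beta\rangle$ of the arc flow dual \eqref{eq:arc_flow_dual_objective}--\eqref{eq:arc_flow_dual_domain2} whose objective value is again $z_{lb} = b^\intercal\overline\beta$ (LP strong duality guarantees this equals the primal-optimal value $\sum_{(u,v)\in\arcset} c_{(u,v)}\overline\varphi_{(u,v)}$). The key step is to relate the minimum reduced cost $\overline c_{(u,v)}$ of the arc to the per-arc reduced cost $\theta_{(u,v)} = c_{(u,v)} - (-\overline\alpha_u + \overline\alpha_v + a_{(u,v)}^\intercal\overline\beta)$. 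Since $\overline\varphi_{(u,v)} > 0$, complementary slackness forces the dual constraint \eqref{eq:arc_flow_dual_constraints1} for this arc to be tight, so $\theta_{(u,v)} = 0$. Moreover, the minimum reduced cost $\overline c_{(u,v)}$ of a path through $(u,v)$ telescopes: as in the proof of Proposition \ref{proposition:arc_flow_pricing}, the reduced cost of any $p \in \pathset_{(u,v)}$ equals $(c_p - a_p^\intercal\overline\beta) + (\overline\alpha_{v^+} - \overline\alpha_{v^-})$, and since $\overline\varphi$ supports a path through $(u,v)$ with zero total reduced cost, I obtain $\overline c_{(u,v)} = 0$ (it cannot be negative by dual feasibility, and it is attained at $0$ along the optimal flow's path through $(u,v)$).

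With $\overline c_{(u,v)} = 0$ established, the removal test becomes $0 > z_{ub} - z_{lb} - 1$, i.e. $z_{ub} < z_{lb} + 1 = b^\intercal\overline\beta + 1$. But the hypothesis gives $z_{ub} \geq \lceil b^\intercal\overline\beta\rceil + 1 \geq b^\intercal\overline\beta + 1$, a direct contradiction. Hence $(u,v)$ fails the test and is not removed, proving the proposition.

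The main obstacle I anticipate is making the telescoping/complementary-slackness argument for $\overline c_{(u,v)} = 0$ fully rigorous: I need to be careful that the optimal \emph{arc} flow $\overline\varphi$ decomposes (via the flow decomposition theorem of Ahuja et al.\ \cite{AMO93}) into paths so that some path through $(u,v)$ has all its arcs tight in \eqref{eq:arc_flow_dual_constraints1}, and that the source/sink dual gap $\overline\alpha_{v^+} - \overline\alpha_{v^-}$ contributes exactly the slack in \eqref{eq:arc_flow_dual_constraints2}, which complementary slackness on the flow variable $z$ also forces to vanish at optimality. Pinning down that both slacks are zero simultaneously along an optimal flow-carrying path is the delicate point; once that is secured, the final inequality chain is immediate.
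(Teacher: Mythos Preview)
Your proposal is correct and follows essentially the same approach as the paper: invoke complementary slackness to conclude $\overline c_{(u,v)} = 0$ whenever $\overline\varphi_{(u,v)} > 0$, and then observe that the removal test $\overline c_{(u,v)} > z_{ub} - z_{lb} - 1$ fails under the hypothesis $z_{ub}\ge\lceil b^\intercal\overline\beta\rceil+1$. The paper states the complementary-slackness conclusion in one line without justifying why the \emph{minimum path} reduced cost $\overline c_{(u,v)}$ (rather than just the arc slack $\theta_{(u,v)}$) vanishes; your flow-decomposition and telescoping argument, together with tightness of \eqref{eq:arc_flow_dual_constraints2} via complementary slackness on $z$, is exactly what is needed to make that step rigorous, so the ``delicate point'' you flag is real but readily handled along the lines you sketch.
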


\begin{proof}
We assume that $z_{ub} \geq$ \rdualbound $+1$, since in the trivial case in which $z_{ub} =$ \rdualbound, optimality is already proven. By the classical complementary slackness theorem, it follows that if $\overline\varphi$ and $\overline\beta$ are, respectively, primal-optimal and dual-optimal solutions, then, either $\overline\varphi_{(u,v)} = 0$, or $\overline c_{(u,v)} = 0$. Then, by supposing that $\overline\varphi_{(u,v)} > 0$, we have that $\overline c_{(u,v)} = 0$. Consequently, \dualbound $+ \overline c_{(u,v)} = $ \dualbound $ \leq $ \rdualbound $ \leq z_{ub} -1$, which implies that $(u,v)$ cannot be removed by the variable-fixing algorithm based on $\overline\beta$ and $z_{ub}$. 
\end{proof}

By disregarding the trivial case in which $z_{ub}=$ \rdualbound, Proposition \ref{theorem:suboptimal_rcvf} ensures that, to remove arcs with a positive primal value in the current optimal basis, we need a dual solution that is sub-optimal. A generalization of Proposition \ref{theorem:suboptimal_rcvf} to the case of general MILP models (not restricted to network flow) is straightforward.

\subsection{Variable-Fixing Strategies}
In this section, we present the three RCVF strategies adopted in \framework. In a preliminary version of this paper \cite{LIM21}, we used a different strategy to obtain alternative dual solutions. This strategy is not presented here, because in our experiments it was always outperformed by the second and third strategies presented below.
The three adopted strategies differ mainly in the way in which a dual solution is computed. In all strategies, each dual solution obtained is used as input to the variable-fixing algorithm.

{\bf First strategy.} The traditional strategy based on a dual solution obtained at the end of CG for the original linear relaxation.

{\bf Second strategy.} We obtain dual solutions by solving a dual path flow model in which, motivated by Proposition \ref{theorem:suboptimal_rcvf}, the solution is allowed to be sub-optimal and the reduced-cost of paths with positive value on a primal-optimal solution are included in the objective. For that, we first consider the primal-optimal solution $\overline \lambda$ in terms of path flow variables. Then, the dual path flow model to be solved is given by: 
\begin{align}
\label{eq:dual_strategy_1_objective}
\max ~& b^\intercal \beta + \sum_{p \in \pathset : \overline\lambda_p > 0} \theta_p, &\\
\label{eq:dual_strategy_1_constraints_1}
\text{s.t.: } & {a_p}^\intercal\beta + \theta_p =  c_p, & \forall p \in \pathset, \overline\lambda_p > 0, \\
\label{eq:dual_strategy_1_constraints_2}
& {a_p}^\intercal\beta \leq c_p, & \forall p \in \pathset, \overline\lambda_p = 0, \\
\label{eq:dual_strategy_1_constraints_3}
& b^\intercal \beta \geq z_{lb} - \epsilon, & \\
& \beta \in \mathbb{R}_+^m,  & \\
\label{eq:dual_strategy_1_domain_3}
& \theta_p \in \mathbb{R}_+,  & \forall p \in \pathset, \overline\lambda_p > 0,
\end{align}
\noindent in which each $\theta_p$ explicitly models the reduced cost of variable $\lambda_p$, with $\overline\lambda_p > 0$. Constraints \eqref{eq:dual_strategy_1_constraints_3} allow the resulting dual solution to be sub-optimal for the original linear relaxation, but limited by a given $\epsilon > 0$.
Model \eqref{eq:dual_strategy_1_objective}--\eqref{eq:dual_strategy_1_domain_3} is solved by row generation (i.e., CG of its primal), by considering the primal variables associated with \eqref{eq:dual_strategy_1_constraints_1} and \eqref{eq:dual_strategy_1_constraints_3} already in the initial RMP. Notice that, when the variable-fixing successfully removes a basic arc, model \eqref{eq:dual_strategy_1_objective}--\eqref{eq:dual_strategy_1_domain_3} possibly provides a different dual solution if solved again. Then, we iteratively solve \eqref{eq:dual_strategy_1_objective}--\eqref{eq:dual_strategy_1_constraints_3} and use the resulting dual solution in the variable-fixing algorithm, until no more arcs are removed.

{\bf Third strategy.} This is a refinement (motivated by Remark \ref{remark:rcvf_basis_violation}) of the expensive method that solves a model \eqref{eq:arc_flow_dual_rcvf_opt} tailored for each arc. It considers the primal-optimal solution $\overline \varphi$ in terms of arc flow variables and solves a restricted sequence of models \eqref{eq:arc_flow_dual_rcvf_opt}, each tailored for an arc $(u,v) \in \arcset$, in which $0 < \overline\varphi_{(u,v)} < 1$. Since we expect an arc $(u,v)$ with $\overline\varphi_{(u,v)}$ closer to $0$ to be more likely to be removed by variable-fixing, we follow a non-decreasing order of $\overline\varphi_{(u,v)}$ to build the sequence of models to be solved. Each model is solved by column-and-row generation, producing a dual-feasible solution, which in turn is used as input to the variable-fixing algorithm. Although we solve only a restricted set of dual models, this strategy can still be expensive, and stopping criteria should be considered. In \framework, the strategy halts once the remaining number of arcs is smaller than $50 m$ (where $m$ is the number of side constraints).
Despite the high computational cost, this strategy can be very effective for cases in which the incumbent solution is optimal and the challenge is to prove optimality. Hence, we only use it on the most expensive node in our branch-and-price tree, i.e., the right branch of the last level. 

\section{Branching Scheme} \label{sec:branching}

A major issue in B\&P is that efficient branching schemes are not always robust. A number of works propose general branching schemes that minimize the impact on the pricing problem and at the same time help convergence to optimality (see, e.g., \cite{V10}). However, not many B\&P schemes exploit the network flow representation of a DWM (as done, e.g., in \cite{AV08}). In general, any constraint $\sum_{(u,v) \in \arcset} a'_{(u,v)}\varphi_{(u,v)} \geq b'$ based on a linear combination of arc flow variables impacts on a pricing solved as a shortest path problem by simply incrementing ${a'_{(u,v)}}^\intercal \beta'$ to the cost of each arc $(u,v) \in \arcset$, where $\beta'$ is the dual solution related to the constraint.
A consequent result is that branching rules based solely on arc flow variables are robust. 

Based on the primal correspondence given by the flow decomposition theorem, any arc flow variable $\varphi_{(u,v)}$ can be represented as a sum $\sum_{p \in \pathset_{(u,v)}} \lambda_p$ of variables from the equivalent path flow model.
Consequently, any arc flow constraint $\sum_{(u,v) \in \arcset} a'_{(u,v)}\varphi_{(u,v)} \geq b'$ can be directly represented as a path flow constraint $\sum_{(u,v) \in \arcset} a'_{(u,v)} \sum_{p \in \pathset_{(u,v)}} \lambda_{p} \geq b'$. 
On the other hand, it is not always possible to rewrite a linear constraint based on path flow variables in terms of arc flow variables. This further motivates branching rules based on arc flow variables, as the resulting branching constraints can be easily handled by both path flow and arc flow models.

\subsection{Proposed Branching Scheme} \label{sec:variable_selection_method}

The branching scheme we propose is based on sets of arcs and exploits the potential of a general MILP solver in finding optimal solutions of small/medium-sized models. Initially, arcs that share mutual characteristics are grouped into subsets. It then considers arcs in all subsets having null flow in the optimal linear solution of a model, and either eliminates all of them or forces at least one of them to be in the solution.

We define an {\em arc family} $\bfamily \subset 2^{\arcset}$ as an arbitrary set of mutually disjoint subsets of $\arcset$.
For each $F \in \bfamily$, let variable $\Phi_F~=~\sum_{(u,v)\in F} \varphi_{(u,v)}$ represent the aggregated sum of arc flow variables associated with arcs in $F$. Given a primal solution $\overline{\varphi}$ in terms of arc flow variables, we represent as $\overline\Phi_F = \sum_{(u,v) \in F} \overline\varphi_{(u,v)}$ the cumulated sum of the solution values of arcs in $F$.

The variable selection considers all variables related to the set of arcs $\mathcal{B} =  \{ (u,v) \in F \in \bfamily : \overline\Phi_F = 0\}$, which are used to create two branches. In the {\em left branch}, we implicitly consider the branching constraint
\begin{equation}
\sum_{(u,v) \in \mathcal{B}} \varphi_{(u,v)} = 0
\end{equation}
by removing all arcs in $\mathcal{B}$ from $\network$. In the {\em right branch} we add the constraint 
\begin{equation} \label{eq:right_branch_constraint}
\sum_{(u,v) \in \mathcal{B}} \varphi_{(u,v)} \geq 1
\end{equation}
to the model, implying that at least one arc in $\mathcal{B}$ must be in the solution. Depending on the definition of $\bfamily$, the left branch is expected to lead to a great reduction in the size of the network, while keeping variables that should provide good feasible solutions. The reduced problem may be solved by an alternative method. In particular, we conclude this branch by solving the residual arc flow model by a MILP solver. The domain reduction in the right branch may be weaker, but the branching constraint may behave as a cutting plane, hopefully improving the optimal dual bound. In fact, although no arcs are explicitly removed from the network in the right branch, the strengthening in the relaxation may improve the effectiveness of RCVF.

In Section \ref{sec:applications}, we present the arc families used in our applications to C\&P problems. Further examples for other applications can be found in \cite{LIM21}. 

\subsection{Lifting the Right-Branch Constraint} \label{sec:lifting_constraint}
Constraint \eqref{eq:right_branch_constraint} imposes that at least one arc in $\mathcal{B}$ must be in the solution. The network structure can be exploited to determine redundant arcs in $\mathcal{B}$ that can be in a solution only if other arcs in $\mathcal{B}$ are also in the solution. The use of any of such redundant arcs implies that the left-hand side of constraint \eqref{eq:right_branch_constraint} is at least $2$. Thus, even if a redundant arc is removed from $\mathcal{B}$ the set of integer solutions that are feasible w.r.t. \eqref{eq:right_branch_constraint} does not change. On the other hand, such removal may violate fractional solutions, and this strengthens the relaxation of the resulting model.

Once a redundant arc is removed from $\mathcal{B}$, the list of remaining redundant arcs must be updated, since some redundant arcs may become non-redundant.
In this way, a greedy removal of redundant arcs is not necessarily optimal w.r.t the number of arcs removed from $\mathcal{B}$. 
For that, we implemented a heuristic DP approach with $O(|\arcset|)$ time complexity to maximize the number of redundant arcs removed. The algorithm iterates over a topological ordering of $\nodeset$. At the iteration of node $u^*$, it computes whether there exists any path from $v^+$ to $u^*$ that does not have an arc in $\mathcal{B}$. If true, it proceeds to the next iteration, otherwise, it sets $\mathcal{B} \gets \mathcal{B} \setminus \{(u^*,v) \in \mathcal{B} \}$. To further remove redundant arcs from $\mathcal{B}$, we also apply the algorithm in the reversed network, obtained by inverting the direction of the arcs.

\section{Applications to Cutting and Packing Problems} \label{sec:applications}

We apply \framework{} to four C\&P problems that allow pseudo-polynomial arc flow models with strong relaxations. 
In the first three applications, we considered network flow models from the literature, which for conciseness are not explicitly reported here. However, since the arc flow model for the fourth application is new, we give its coefficients to model \eqref{eq:path_flow_of}--\eqref{eq:path_flow_domain} and \eqref{eq:arc_flow_of}--\eqref{eq:arc_flow_domain2}.

\subsection{Cutting Stock Problem} \label{sec:applications_csp}

In the CSP, we are given an unlimited number of stock rolls of length $W \in \mathbb{Z}_+$ and a set $I$ of items, each $i \in I$ associated with a width $w_i \in \mathbb{Z}_+$ and a demand $d_i \in \mathbb{Z}_+$. The objective is to cut the minimum number of stock rolls to obtain all demands. An equivalent problem is the bin packing problem (BPP), where $d_i = 1$, for all $i \in I$. We refer to \cite{DIM16} for a recent survey. 

The classical pattern-based model in \cite{GG61,GG63} is a path flow model derived from a DW decomposition of the textbook CSP model (see, e.g., \cite{MT90}). The  underlying network of the model in \cite{GG61,GG63} is a DP network of an unbounded knapsack problem. The equivalent arc flow model was first solved in \cite{V99}. The network of this model discretizes the stock roll into $W$ unitary positions, and each node in $ \nodeset \subseteq \{ v : v=0,\ldots,W \}$ is associated with a position, where $v^+ = 0$ and $v^- = W$. Each item $i \in I$ has a set $\arcset_i \subseteq \{ (v, v+w_i) : v \in \nodeset, v \leq W-w_i \}$ of arcs, where each arc $(v, v+w_i) \in \arcset_i$ represents the cut of item $i$ starting from $v$ in a stock roll. An additional set of arcs $\arcset^- = \{(v, v^-) : v \in \nodeset \}$ represents waste portions of a stock roll. The overall set of arcs is  $\arcset = \cup_{i \in I} \arcset_i \cup \arcset^-$.

A subset relation is used in the definition of $\nodeset$ and $\arcset_i$ ($i \in I$) since not all positions are necessary to solve the model exactly. Indeed, \cite{V99} proposed reduction criteria to remove redundant arcs by considering that items can always be cut following a non-increasing width ordering. Later, C{\'o}t{\^e} and Iori \cite{CI18} proposed the {\em meet-in-the-middle} (MIM) patterns, which allowed to produce significantly smaller networks.
Based on the following remark, we developed a technique that further reduces the network from \cite{V99} and may lead to smaller networks  than the ones resulting from the MIM patterns. 
\begin{remark} \label{property:csp1}
Given a CSP instance, let $\overline W \in \mathbb{Z}_+$ be a value ensuring that there exists an optimal solution where the maximum waste of a single stock roll is at most $\overline W$.
Then, all arcs contained only in paths associated with cutting patterns with a waste larger than $\overline W$ can be removed from the network.
\end{remark}
A straightforward way to compute $\overline W$ considers that in any CSP solution with $K$ stock rolls the maximum waste on each roll is at most $KW-\sum_{i\in I} w_id_i$. 
{All arcs that only lead to cutting patterns with a waste larger than $\overline W$} are computed by a back propagation in the network, similarly to the method by Trick \cite{T03} to propagate knapsack networks in constraint programming. The computation of the {\em waste-limited network} is given in Algorithm \ref{alg:csp_network}. By considering items ordered by non-increasing $w_i$, lines \ref{algline:csp_vdc_inicio} to \ref{algline:csp_vdc_fim} create the standard network in \cite{V99}, and lines \ref{algline:csp_reduction_inicio} to \ref{algline:csp_reduction_fim} impose the reduction from Remark \ref{property:csp1}.

\begin{algorithm}
\DontPrintSemicolon 
\KwIn{$W$, $I$, and $\overline W$}
$\nodeset^+ \gets \{0\}$\; \label{algline:csp_vdc_inicio}
\For {$i=1,\ldots,|I|$}{
	\For{$copy=1,\ldots,d_i$}{
		\For{$v\in \nodeset^+$ in decreasing order}{
			\If{$v+w_i \leq W$}{
				$\arcset_i \gets \arcset_i \cup \{ (v, v+w_i) \} $\;
				$\nodeset^+ \gets \nodeset^+ \cup \{ v+w_i \} $\; \label{algline:csp_vdc_fim}
			}
		}
	}
}
$\nodeset^- \gets \{W-\overline W,\ldots,W\}$\; \label{algline:csp_reduction_inicio}
\For {$i=1,\ldots,|I|$}{
	\For{$copy=1,\ldots,d_i$}{
		\For{$(v,v+w_i) \in \arcset_i$ in increasing order of $v$}{
			\If{$v+w_i \in \nodeset^-$}{
				$\nodeset^- \gets \nodeset^- \cup \{ v \} $\;
			}
			\Else{
				$\arcset_i \gets \arcset_i \setminus \{ (v, v+w_i) \} $\; \label{algline:csp_reduction_fim}
			}
		}
	}
}

$\nodeset \gets \{v : (v, v+w_i) \in \arcset_i \text{ or } (v-w_i, v) \in \arcset_i, i \in I \} \cup \{0, W\}$\;
$\arcset^- \gets \{ (v, W) : v \geq W-\overline W, v \in \nodeset \}$\;
\Return{$\nodeset$, $\arcset_i$ (for all $i\in I$) and $\arcset^-$}\;
\caption{{\sc WasteLimitedNetworkCSP}}
\label{alg:csp_network}
\end{algorithm}

We consider two general classes of arc families. The first arc family
\[ \bfamily^a_k = \{ F_n = \{ (u,v) \in \arcset : u \in \{ nk,\ldots,nk+(n-1)\} \} : n = 0,\ldots,\lceil W/k \rceil - 1 \} \]
\noindent considers the sets of nodes sequentially partitioned in $\lceil W/k \rceil -1$ parts of up to $k$ nodes each. Then, each set $F_n$ represents the $n$-th part, and it contains all arcs whose tail lies in such part. In general, larger values of $k$ provide a conservative reduction in the left branch. The second arc family
\[ \bfamily^b_k = \{ F_{in} = \{ (u,v) \in \arcset_i : u \in \{ nW/k,\ldots,(n+1)W/k-1\} \} : n = 0,\ldots,k-1 \} \]
\noindent considers the set of arcs of each item partitioned into $k$ parts, following an increasing order of the nodes. Each set $F_{in}$ contains each arc in $\arcset_i$ whose tail lies in the $n$-th part. For instance, when $k=2$, $\bfamily^b_2$ represents a partition of the arcs of each item into two parts, each representing either the first half or the second half of a stock roll. If $k=W$, then each part contains up to a single arc. For this family, larger values of $k$ generate in the left branch problems that are smaller, but also less likely to contain good integer solutions.

\subsection{Two-Stage Guillotine Cutting Stock Problem} \label{sec:applications_2gcsp}
In the two-stage guillotine cutting stock problem (2GCSP), we are given an unlimited number of two-dimensional stock sheet with width $W$ and height $H$, and a set $I$ of two-dimensional items. Each item $i \in I$ has width $w_i$, height $h_i$, and demand $d_i$. The aim is to cut all items  from the minimum number of stock sheets, by using two-stage guillotine cuts. The first and second cut stages consist of, respectively, horizontal and vertical cuts parallel to the edges of the stock sheet. To separate items from waste, a third trimming stage is allowed.

The state-of-the-art exact method for the 2GCSP is the arc flow model in \cite{CI18}, which corresponds to the model proposed by Macedo et al. \cite{MAV10} enhanced by the use of the MIM patterns.
Let $H^* = \{ h_i : i \in I \}$ be the set of all different item heights.
The arc flow model has a graph $(\nodeset^1, \arcset^1)$ representing first-stage cut decisions, in which
$\nodeset^1  \subseteq \{ v : v = 0,\ldots,H \}$ and $\arcset^1  \subseteq \{ (v,v+h) : v \in \nodeset^1, h \in H^*  \}$.
There is also a graph $(\nodeset^2_h, \arcset^2_{h})$ representing second-stage cut decisions in each strip of height $h \in H^*$ cut in the first stage, in which
$\nodeset^2_h	 \subseteq \{ v : v = 0,\ldots,W \}$ and $\arcset^2_{h}  \subseteq \{ (v,v+w_i) : v=0,\ldots,W, i \in I, h_i \leq h \}$.
There are also source arcs connecting $v^+$ to $0$ in $\nodeset^1$ and $\nodeset^2_h$ (for each $h \in H^*)$ and sink arcs connecting each node in $\nodeset^1$ and $\nodeset^2_h$ (for each $h \in H^*$) to $v^-$. A full description of the model is reported in \cite{CI18}.

In our experiments for the 2GCSP, we solve the arc flow model in \cite{CI18} with \framework{} by setting $K=10$ and using the  arc family:
\begin{align*}
\bfamily = & \{ F_v = \{ (v, v+h) \in \arcset^1_h : h \in H^* \} : v \in \nodeset^1 \} \\
& \cup \{ F_v = \{ (v,v+w_i) \in \arcset^2_{h} : h \in H^*, i \in I, h_i \leq h\} : v \in \cup_{h \in H^*} \nodeset^2_h \},
\end{align*}
\noindent where the first and second parts correspond to all arcs related to specific first-stage and second-stage cut positions, respectively.

Since the arc flow model for the 2GCSP does not follow the hypothesis of Proposition \ref{proposition:dual_transformation_1}, we must rely on the heuristic method in Section \ref{sec:safe_dual} to convert dual-infeasible solutions into feasible ones, which may fail. We attested that this heuristic conversion always succeeded in the solution of the LP relaxation in all nodes of the branching tree. However, it did not always succeed for the (many) dual-infeasible solutions obtained in the second and third variable-fixing strategies, which were consequently deactivated.

\subsection{Skiving Stock Problem} \label{sec:applications_ssp}
In the skiving stock problem (SSP), we are given the minimum width $W$ of a large object and a set $I$ of items, where each item $i\in I$ has a width $w_i$ and a maximum number of copies $b_i$. The objective is to recompose the items into the maximum number of large objects.

The state-of-the-art exact method for the SSP is the reflect formulation in \cite{MDISS20}. In our experiments, we solve the pseudo-polynomial arc flow model in \cite{MS16a}. The network of this model uses a set $\nodeset \subseteq \{ v : v=0,\ldots,W' \}$ of nodes to represent integer linear combinations of the given item widths, where $W' \geq W$ is a sufficiently large value, $v^+ = 0$, and $v^- = W'$. A set $\arcset_i \subseteq \{ (v, v+w_i) : v \in \nodeset, v \leq W'-w_i \}$ of arcs is associated with each $i \in I$, where each arc $(v, v+w_i) \in \arcset_i$ represents the inclusion of an item $i$ from positions $v$ to $v+w_i$ of a large object. There is also a set of sink arcs $\arcset^- \subseteq \{(v, v^-) : v \in \nodeset, v \geq W\}$, where each $(v, v^-)$ represents the final composition of a large object with total width $v$ in the solution. The overall set of arcs is  $\arcset = \cup_{i \in I} \arcset_i \cup \arcset^-$. 

In our experiments, the arc flow model is solved by \framework{} based on an arc family equivalent to $\bfamily^b_{20}$ (Section \ref{sec:applications_csp}) and with $K=10$.

\subsection{Ordered Open-End Bin Packing Problem} \label{sec:applications_ooebpp}
In the ordered open-end bin packing problem (OOEBPP), we are given an unlimited number of copies of a one-dimensional bin with capacity $W$ and a sequence $I = (1,\ldots,m)$ of one-dimensional items, where each item $i \in I$ has weight $w_i$. The aim is to pack all items into the minimum number of bins, by allowing the last item (and only the last item) of the sequence in each bin to exceed the bin capacity.

The state-of-the-art exact method for the OOEBPP is a B\&P algorithm for a set-covering formulation (hence a path flow model) by Ceselli and Righini \cite{CR08}. 
We use the model in \cite{CR08} to derive an equivalent arc flow model where the network is based on a pseudo-polynomial pricing algorithm. This network is similar to the one of the DP-flow model for the CSP (see, e.g., \cite{DIM16}), which is based on the classical DP recursion for the knapsack problem. The set of nodes is given by $\nodeset \subseteq \{ (i, W') : i = 1,\ldots,m, W'=0,\ldots,W-1 \} \cup \{ v^- \}$, where $v^+ = (1,0)$. Each item $i=1,\ldots,m-1$ has a set $\arcset_i \subseteq \{ ((i,W'), (i+1, W'+w_i)) : W'=0,\ldots,W-w_i-1\}$ of arcs representing its selection as a non-last item in a bin, and a set $\arcset^{d}_i \subseteq \{ ((i,W'), (i+1,W')) : W'=0,\ldots,W-1  \}$ of dummy arcs representing the decision of not taking $i$ item in a path. Each item $i=1,\ldots,m$ is associated with a set $\arcset^-_i \subseteq \{ ((i,W'), v^-) : W'=0,\ldots,W-1\}$, of sink arcs representing the decision of taking $i$ as the last item in a bin. For the last item ($i=m$) in the input, we define $\arcset_m = \arcset^d_m = \emptyset$. The full set of arcs is given by $\arcset = \cup_{i \in I}(\arcset_i \cup \arcset^d_i \cup \arcset^-_i)$. An advantage of this network is that it explicitly models the ordering of the items in all feasible paths. 

In the resulting arc flow model, the objective function minimizes the number of arcs reaching the sink node, i.e., the number of paths (bins) used. In this way, $c_{(u,v)}$ is $1$, if $(u,v) \in \cup_{i \in I} \arcset^-_i$, and $0$, otherwise. The side constraints guarantee that each item is included in at least one path (bin). Thus, for all $i \in I$, $b_i = 1$ and $a_{i(u,v)} = 1$, for all $(u,v) \in \arcset_i \cup \arcset^-_i$, and $0$, otherwise.

In our experiments, we solve this model by \framework, using $K=10$ levels and the simple arc family
$\bfamily = \{ F_{(u,v)} = \{ (u,v) \} : (u,v) \in \arcset \}$,
which partitions $\arcset$ into individual arcs. 

\section{Computational Experiments} \label{sec:experiments}

This section discusses the results of our computational experiments.
The algorithms were coded in C++ and the LP and MILP models were solved by Gurobi 9.1.1. The experiments were run on a computer with an Intel Xeon E3-1245 v5 at 3.50GHz and 32GB RAM, with a single-thread limit.
First, based on the CSP, we evaluate the performance of the \framework{} components. Then, we also compare our results with those obtained by the state-of-the-art algorithms on each problem. 

\subsection{Experiments on the Cutting Stock Problem}

For the CSP, we consider the benchmark used to test the most recent exact methods, available at the BPPLIB \cite{DIM18}. They include classes Falkenauer, Hard, Scholl, Schwerin, and Waescher, which are all well-solved by state-of-the-art methods, and classes AI and ANI from \cite{DIM16}, which contain several open instances. Classes AI and ANI are both composed of 250 instances, divided into 5 groups of 50 instances having the same number of items. Instances in AI have optimal solution value equal to the optimal dual bound $z_{lb}$ of the pattern-based model, whereas instances in ANI have optimal solution value equal to $z_{lb}+1$.\\ 

\textbf{Comparison of variable-fixing strategies.}
We propose four algorithms to evaluate the RCVF effectiveness. They consist in applying (or not) the RCVF strategies from Section \ref{sec:variable_fixing} to some extent, and then solving the residual problem as an arc flow model by the MILP solver: in {\em No RCVF}, no variable-fixing is applied; {\em RCVF 1} uses the first strategy; {\em RCVF 1+2} uses the first and second strategies in sequence; and {\em RCVF 1+2+3} uses all strategies in sequence. A time limit of 600 seconds per instance is imposed. Table \ref{tab:csp_rcvf} gives average running times (time) in seconds, numbers of instances optimally solved (opt), and average percentages of arcs removed (rd. $\%$).

\begin{table}
\centering
\setlength{\tabcolsep}{2.5pt}
\caption{\small{Comparison of RCVF strategies for the CSP (time limit 600s)}}
\label{tab:csp_rcvf}
\small\begin{tabular}{lrrrrrrrrrrrr} 
\toprule
             & \multicolumn{1}{l}{}       & \multicolumn{2}{c}{No RCVF}                        & \multicolumn{3}{c}{RCVF 1}                                                       & \multicolumn{3}{c}{RCVF 1+2}                                                   & \multicolumn{3}{c}{RCVF 1+2+3}                                                \\
\cmidrule(lr){3-4} \cmidrule(lr){5-7} \cmidrule(lr){8-10} \cmidrule(lr){11-13}
Class        & \multicolumn{1}{c}{\# ins} & \multicolumn{1}{c}{time} & \multicolumn{1}{c}{opt} & \multicolumn{1}{c}{rd. \%} & \multicolumn{1}{c}{time} & \multicolumn{1}{c}{opt} & \multicolumn{1}{c}{rd. \%} & \multicolumn{1}{c}{time} & \multicolumn{1}{c}{opt} & \multicolumn{1}{c}{rd. \%} & \multicolumn{1}{c}{time} & \multicolumn{1}{c}{opt}  \\ 
\midrule
AI200        & 50                         & 29.9                     & \textbf{50}             & 35.2                        & 16.3                     & \textbf{50}             & 61.8                        & 14.4                     & \textbf{50}             & 66.6                        & 22.7                     & \textbf{50}              \\
AI400        & 50                         & 410.6                    & 27                      & 41.5                        & 242.8                    & 36                      & 78.5                        & 171.1                    & 38                      & 84.7                        & 141.8                    & \textbf{41}              \\
AI600        & 50                         & 600.0                    & 0                       & 30.8                        & 543.7                    & 7                       & 68.5                        & 350.4                    & 25                      & 78.3                        & 254.6                    & \textbf{33}              \\
AI800        & 50                         & 600.0                    & 0                       & 29.2                        & 593.5                    & 2                       & 74.0                        & 371.3                    & 24                      & 83.6                        & 301.4                    & \textbf{35}              \\
AI1000       & 50                         & 600.0                    & 0                       & 31.4                        & 589.7                    & 1                       & 66.0                        & 480.5                    & 14                      & 73.8                        & 433.3                    & \textbf{24}              \\
\cmidrule(lr){1-2} \cmidrule(lr){3-4} \cmidrule(lr){5-7} \cmidrule(lr){8-10} \cmidrule(lr){11-13}
ANI200       & 50                         & 53.9                     & \textbf{50}             & 61.1                        & 13.0                     & \textbf{50}             & 96.9                        & 2.0                      & \textbf{50}             & 97.9                        & 1.4                      & \textbf{50}              \\
ANI400       & 50                         & 546.0                    & 11                      & 47.0                        & 203.5                    & 39                      & 94.9                        & 47.7                     & 47                      & 99.7                        & 7.5                      & \textbf{50}              \\
ANI600       & 50                         & 600.0                    & 0                       & 29.4                        & 583.8                    & 4                       & 84.7                        & 204.4                    & 34                      & 99.8                        & 36.9                     & \textbf{50}              \\
ANI800       & 50                         & 600.0                    & 0                       & 31.6                        & 600.0                    & 0                       & 80.0                        & 328.0                    & 26                      & 98.0                        & 154.0                    & \textbf{47}              \\
ANI1000      & 50                         & 600.0                    & 0                       & 35.9                        & 600.0                    & 0                       & 79.7                        & 428.7                    & 20                      & 88.1                        & 357.9                    & \textbf{31}              \\
\cmidrule(lr){1-2} \cmidrule(lr){3-4} \cmidrule(lr){5-7} \cmidrule(lr){8-10} \cmidrule(lr){11-13}
Falkenauer T & 80                         & 0.3                      & \textbf{80}             & 4.4                         & 0.2                      & \textbf{80}             & 4.5                         & 0.3                      & \textbf{80}             & 8.7                         & 0.4                      & \textbf{80}              \\
Falkenauer U & 80                         & 0.2                      & \textbf{80}             & 3.2                         & 0.2                      & \textbf{80}             & 3.4                         & 0.3                      & \textbf{80}             & 3.4                         & 0.3                      & \textbf{80}              \\
Hard         & 28                         & 60.6                     & \textbf{28}             & 65.2                        & 30.0                     & \textbf{28}             & 69.5                        & 27.7                     & \textbf{28}             & 69.8                        & 28.9                     & \textbf{28}              \\
Random       & 3840                       & 1.9                      & \textbf{3838}           & 15.9                        & 2.2                      & \textbf{3838}           & 16.9                        & 2.3                      & \textbf{3838}           & 17.0                        & 15.0                     & 3823                     \\
Scholl       & 1210                       & 12.7                     & \textbf{1195}           & 12.9                        & 20.7                     & 1191                    & 13.8                        & 21.1                     & 1190                    & 13.8                        & 45.5                     & 1160                     \\
Schwerin     & 200                        & 3.5                      & \textbf{200}            & 3.5                         & 3.3                      & \textbf{200}            & 3.5                         & 3.3                      & \textbf{200}            & 3.6                         & 5.8                      & \textbf{200}             \\
Waescher     & 17                         & 298.0                    & \textbf{14}             & 17.4                        & 305.1                    & \textbf{14}             & 17.7                        & 306.1                    & \textbf{14}             & 17.7                        & 362.2                    & \textbf{14}              \\ 
\midrule
Overall      & 5955                       & 295.1                    & 5573                    & 29.1                        & 255.8                    & 5620                    & 53.8                        & 162.3                    & 5758                    & 59.1                        & 127.6                    & \textbf{5796}            \\
\bottomrule
\end{tabular}
\end{table}

Classes Falkenauer, Hard, and Schwerin are well-solved by all algorithms. The number of instances solved in class Waescher remains unchanged for all algorithms. Random and Scholl are the only classes in which applying the more expensive strategies worsens the number of instances solved. However, most of the instances in these classes fall in the case in which $z_{opt} = \lceil z_{lb} \rceil$, in which RCVF is not very helpful. However, by analyzing the subset of instances in these two classes in which $z_{opt} = \lceil z_{lb} \rceil + 1$, we noticed that RCVF led to a good average improvement.

For the hardest classes (AI and ANI), it is always worth to apply more expensive RCVF strategies, because the number of solved instances increases consistently. Already with the first strategy, a substantial reduction in the network size is observed. A larger improvement is obtained when the second and third strategies are used, especially for the ANI instances. These instances particularly benefit from RCVF, because at the beginning of all algorithms we already have the optimal solution (easily found by simple heuristics) and the challenge is just to prove optimality. Then, the use of sub-optimal dual solutions consistently strengthens the LP relaxation, and optimality of 134 out of 250 ANI instances is proven even before invoking the MILP solver. Clearly, this did not occurr in any AI instance. The average ratio of basic arcs removed per iteration for AI and ANI is, respectively, $10.1$ and $56.5$ in the second strategy, and $5.7$ and $8.4$ in the third strategy.
Overall, the results prove that applying the three strategies leads to better results on average.\\

%
\textbf{Analysis of the heuristic behavior of the arc families.} To analyze whether the branching scheme of Section \ref{sec:branching} is a good approach to quickly find optimal solutions, we solve a single left branch for six different arc families: $\bfamily^a_1$, $\bfamily^a_5$, $\bfamily^a_{10}$, $\bfamily^b_5$, $\bfamily^b_{10}$, and $\bfamily^b_{50}$. For each family, we solve the linear relaxation (no RCVF is applied) and a single left branch, to obtain a feasible solution. Then, optimality is determined by comparing the solution value with the dual bound. For the sake of conciseness we do not report extended results.

The best balance between number of instances solved to proven optimality and computational time is given by $\bfamily^a_1$.
With this family, the number of AI instances with $200$, $400$, $600$, $800$, and $1000$ items solved to proven optimality by this procedure is respectively $48$, $34$, $32$, $28$, and $16$, with average computational time being, respectively, $0.6$, $7.9$, $68.9$, $121.5$, and $237.9$ seconds. By comparing these results with column `No RCVF' in Table \ref{tab:csp_rcvf}, it is clear that the branching improves the MILP solver in quickly finding an optimal solution.
All next experiments on the CSP are based on arc family $\bfamily^a_1$.\\ 

%
\textbf{Comparison of different networks.}
To evaluate the effectiveness of the new waste-limited network, we provide a comparison with network based on the MIM patterns \cite{CI18}. For that, we solved the models derived from both networks by \framework, with a time limit of 600 seconds per instance. Table \ref{tab:csp_network} presents the results, giving average number of arcs (\# arcs), average time (time), and number of proven optimal solutions (opt).

In the AI and ANI classes, the waste-limited network performs better than the MIM, mainly because goal solutions (of value $\lceil z_{lb} \rceil$) of instances in these classes do not allow any waste, which greatly benefits the reduction based on Remark \ref{property:csp1}. 
The remaining classes are all well-solved by using both networks, except for  Waescher, in which three instances are always unsolved, and Scholl, in which the MIM performs better. Since there is no overall dominance among the networks, in the next experiments we compute both networks a priori and use smallest one.\\

\begin{table}
\centering
\caption{\small{Comparison of different networks (time limit 600s)}}
\small\begin{tabular}{lrrrrrrr}
\toprule
             & \multicolumn{1}{l}{}       & \multicolumn{3}{c}{MIM}                                                          & \multicolumn{3}{c}{Waste-limited network}                                                  \\

\cmidrule(lr){3-5} \cmidrule(lr){6-8}
Class        & \multicolumn{1}{l}{\# ins} & \multicolumn{1}{c}{\# arcs} & \multicolumn{1}{c}{time} & \multicolumn{1}{c}{opt} & \multicolumn{1}{c}{\# arcs} & \multicolumn{1}{c}{time} & \multicolumn{1}{c}{opt}  \\
\midrule
AI200        & 50                         & 85507.8                     & 10.4                     & \textbf{50}             & 59745.4                     & 2.0                      & \textbf{50}              \\
AI400        & 50                         & 671348.2                    & 169.8                    & 48                      & 507569.6                    & 25.2                     & \textbf{50}              \\
AI600        & 50                         & 2128423.7                   & 327.0                    & 37                      & 1670828.2                   & 118.1                    & \textbf{48}              \\
AI800        & 50                         & 6004603.6                   & 475.4                    & 23                      & 4789367.9                   & 271.2                    & \textbf{41}              \\
AI1000       & 50                         & 15134832.1                  & 596.8                    & 2                       & 11949055.6                  & 513.7                    & \textbf{18}              \\

\midrule

ANI200       & 50                         & 83947.6                     & 32.2                     & 49                      & 59289.7                     & 3.0                      & \textbf{50}              \\
ANI400       & 50                         & 668211.3                    & 278.1                    & 44                      & 503879.3                    & 24.9                     & \textbf{50}              \\
ANI600       & 50                         & 2121692.8                   & 487.5                    & 17                      & 1663330.5                   & 105.1                    & \textbf{49}              \\
ANI800       & 50                         & 5990194.7                   & 590.1                    & 4                       & 4772556.4                   & 291.8                    & \textbf{43}              \\
ANI1000      & 50                         & 15104071.5                  & 600.0                    & 0                       & 11915143.1                  & 507.9                    & \textbf{21}              \\

\midrule

Falkenauer T & 80                         & 4840.1                      & 0.3                      & \textbf{80}             & 1285.8                      & 0.3                      & \textbf{80}              \\
Falkenauer U & 80                         & 1513.8                      & 0.1                      & \textbf{80}             & 2906.6                      & 0.1                      & \textbf{80}              \\
Hard         & 28                         & 22219.4                     & 23.7                     & \textbf{27}             & 27066.5                     & 24.8                     & \textbf{27}              \\
Random       & 3840                       & 4238.4                      & 0.6                      & \textbf{3840}           & 9833.7                      & 0.9                      & \textbf{3840}            \\
Scholl       & 1210                       & 11384.0                     & 1.3                      & \textbf{1210}           & 26426.9                     & 12.9                     & 1193                     \\
Schwerin     & 200                        & 5560.4                      & 0.2                      & \textbf{200}            & 11928.4                     & 1.0                      & \textbf{200}             \\
Waescher      & 17                         & 101141.2                    & 211.9                    & \textbf{14}             & 121536.2                    & 334.9                    & \textbf{14}              \\
\midrule
Overall      & 5955                       & 2831984.1                   & 223.8                    & 5725                    & 2240691.2                   & 131.6                    & \textbf{5854}           \\
\bottomrule
\end{tabular}
\label{tab:csp_network}
\end{table}

\textbf{Comparison with the state-of-the-art.}
We compare the results obtained by \framework{} with: the enhanced solution of the reflect formulation (a pseudo-polynomial CSP arc flow model) by Delorme and Iori \cite{DI20}, solved with an Intel Xeon at 3.10 GHz and 8 GB RAM; the branch-and-cut-and-price algorithm by Wei et al. \cite{WLBL20}, solved with an Intel Xeon E5-1603 at 2.80-GHz and 8 GB RAM; and the branch-and-cut-and-price algorithm for general network flow problems with resource constraints by Pessoa et al. \cite{PSUV20}, solved with an Intel Xeon E5-2680 at 2.50 GHz with 128 GB RAM shared by 8 copies of the algorithm running in parallel. All experiments considered a time limit of 3600 seconds per instance. To compare the performance of each CPU, we provide their single-thread passmark indicators (STPI) (available at \url{www.passmark.com}), where higher values are associated with better performance. The computer used in the experiments of \cite{DI20}, \cite{PSUV20}, \cite{WLBL20}, and the present work have STPI $2132$, $1763$, $1635$, and $1739$, respectively.

\begin{table}
\centering
\caption{\small{Comparison with the state-of-the-art for the CSP (time limit 3600s)}}
\label{tab:csp_state_of_the_art}
\setlength{\tabcolsep}{3.5pt}
\small\begin{tabular}{lrrrrrrrrrrr} 
\toprule
                                  & \multicolumn{1}{l}{}       & \multicolumn{2}{c}{DI \cite{DI20}}                             & \multicolumn{2}{c}{WLBL \cite{WLBL20}}                           & \multicolumn{2}{c}{PSUV \cite{PSUV20}}                           & \multicolumn{2}{c}{Arc Flow}                       & \multicolumn{2}{c}{\framework}                       \\
\cmidrule(lr){3-4} \cmidrule(lr){5-6} \cmidrule(lr){7-8} \cmidrule(lr){9-10} \cmidrule(lr){11-12} 
Class & \multicolumn{1}{c}{\# ins} & \multicolumn{1}{c}{time} & \multicolumn{1}{c}{opt} & \multicolumn{1}{c}{time} & \multicolumn{1}{c}{opt} & \multicolumn{1}{c}{time} & \multicolumn{1}{c}{opt} & \multicolumn{1}{c}{time} & \multicolumn{1}{c}{opt} & \multicolumn{1}{c}{time} & \multicolumn{1}{c}{opt}  \\
\midrule
AI200                             & 50                         & 8.5                      & \textbf{50}             & 4.2                      & \textbf{50}             & 52.3                     & \textbf{50}             & 21.5                     & \textbf{50}             & 2.0                      & \textbf{50}              \\
AI400                             & 50                         & 1205.0                   & 40                      & 398.1                    & 46                      & 491.4                    & 47                      & 904.2                    & 44                      & 25.2                     & \textbf{50}              \\
AI600                             & 50                         & \multicolumn{1}{c}{-}    & \multicolumn{1}{c}{-}   & 1759.6                   & 27                      & 1454.1                   & 35                      & 3326.9                  & 9                       & 192.4                    & \textbf{49}              \\
AI800                             & 50                         & \multicolumn{1}{c}{-}    & \multicolumn{1}{c}{-}   & 2766.3                   & 15                      & 2804.7                   & 28                      & 3600.0                   & 0                       & 566.5                    & \textbf{46}              \\
AI1000                            & 50                         & \multicolumn{1}{c}{-}    & \multicolumn{1}{c}{-}   & 3546.1                   & 2                       & \multicolumn{1}{c}{-}    & \multicolumn{1}{c}{-}   & 3600.0                   & 0                       & 1577.1                   & \textbf{36}              \\

\midrule
ANI200                            & 50                         & 49.3                     & \textbf{50}             & 13.9                     & \textbf{50}             & 16.7                     & \textbf{50}             & 16.3                     & \textbf{50}             & 3.0                      & \textbf{50}              \\
ANI400                            & 50                         & 2703.9                   & 17                      & 436.2                    & 47                      & 96.0                     & \textbf{50}             & 1252.5                   & 42                      & 24.9                     & \textbf{50}              \\
ANI600                            & 50                         & \multicolumn{1}{c}{-}    & \multicolumn{1}{c}{-}   & 3602.7                   & 0                       & 3512.5                   & 3                       & 3473.5                  & 6                       & 140.7                    & \textbf{50}              \\
ANI800                            & 50                         & \multicolumn{1}{c}{-}    & \multicolumn{1}{c}{-}   & 3605.9                   & 0                       & 3600.0                   & 0                       & 3600.0                   & 0                       & 393.2                    & \textbf{49}              \\
ANI1000                           & 50                         & \multicolumn{1}{c}{-}    & \multicolumn{1}{c}{-}   & 3637.7                   & 0                       & \multicolumn{1}{c}{-}    & \multicolumn{1}{c}{-}   & 3600.0                   & 0                       & 1302.5                   & \textbf{43}              \\

\midrule
Falkenauer T                      & 80                         & 1.0                      & \textbf{80}             & 1.9                      & \textbf{80}             & 16.0                     & \textbf{80}             & 0.8                        & \textbf{80}                       & 0.3                      & \textbf{80}              \\
Falkenauer U                      & 80                         & 0.1                      & \textbf{80}             & 3.8                      & \textbf{80}             & \multicolumn{1}{c}{-}    & \multicolumn{1}{c}{-}   & 0.1                        & \textbf{80}                       & 0.1                      & \textbf{80}              \\
Hard                              & 28                         & 4.2                      & \textbf{28}             & 41.5                     & \textbf{28}             & 17.0                     & \textbf{28}             & 39.9                        &\textbf{28}                       & 23.6                     & \textbf{28}              \\
Random                            & 3840                       & \multicolumn{1}{c}{-}    & \multicolumn{1}{c}{-}   & 6.2                      & \textbf{3840}           & \multicolumn{1}{c}{-}    & \multicolumn{1}{c}{-}   & 1.4                        & \textbf{3840}                       & 0.9                      & \textbf{3840}            \\
Scholl                            & 1210                       & 6.6                      & \textbf{1210}           & 5.0                      & \textbf{1210}           & \multicolumn{1}{c}{-}    & \multicolumn{1}{c}{-}   & 8.2                        & \textbf{1210}                       & 1.4                      & \textbf{1210}            \\
Schwerin                          & 200                        & 0.2                      & \textbf{200}            & 0.3                      & \textbf{200}            & \multicolumn{1}{c}{-}    & \multicolumn{1}{c}{-}   & 1.3                        & \textbf{200}            & 0.2                      & \textbf{200}             \\
Waescher                          & 17                         & 41.3                     & \textbf{17}             & 8.7                      & \textbf{17}             & \multicolumn{1}{c}{-}    & \multicolumn{1}{c}{-}   & 1510.0                        & \textbf{17}                       & 161.2                    & \textbf{17}              \\
\bottomrule
\end{tabular}
\end{table}

The results are presented in Table \ref{tab:csp_state_of_the_art}. Under columns `Arc Flow' and `\framework', we report the results of the arc flow model solved, respectively, directly by Gurobi and by \framework.
For the AI and ANI classes, the best previous results were obtained by Pessoa et al. \cite{PSUV20}, who optimally solved 263 out of 500 instances. \framework{} raised the number of proven optima to 430, and it could solve for the first time many ANI instances with 800 and 1000 items, mainly due to the new RCVF strategies. The LP relaxation strengthening obtained by these strategies allowed some of these instances to be solved already at the root node. The new branching scheme helped finding an optimal solution for several hard AI instances within a reasonably short computational time. For the remaining classes, the method by Wei et al. \cite{WLBL20} already performed well. Our method could improve the average solution time of all classes with the exception of Waescher, because of a few instances that required a large computational time in the solution of specific left branches.

\subsection{Experiments on the Two-Staged Cutting Stock Problem}
For the 2GCSP, we compare \framework{} with the most recent exact methods for the problem, namely: the B\&P by Mrad et al. \cite{MMH13}; and the pseudo-polynomial MILP models by Macedo et al. \cite{MAV10}, Silva et al. \cite{SAV10}, and C\^ot\'e and Iori \cite{CI18}. The best published results are the ones by the arc flow model in \cite{CI18}, which is also the model that we solve with \framework.
The experiments were based on benchmark classes A and APT, and their respective variants A-r and APT-r in which items and stock sheets are rotated by 90 degrees. These instances can be downloaded from the 2DPackLib (\url{http://or.dei.unibo.it/library/2dpacklib}).

Table \ref{tab:2gcsp_state_of_the_art} reports, for each algorithm and each class, the average time and the number of instances solved to proven optimality.
The algorithm in \cite{MMH13} was solved on a Pentium IV at 2.2 GHz with 4 GB RAM (STPI 562), and the models in \cite{MAV10,SAV10} were solved on an Intel Core Duo at 1.87 GHz with 2 GB RAM (STPI 675). We also compare \framework{} with the arc flow model solved directly by Gurobi, which is the approach in \cite{CI18}. Thus, column CI does not report the results in \cite{CI18}, but rather the updated (and improved) results obtained by solving their model by the current version of Gurobi in our computer. All results consider a time limit of 7200 seconds per instance.

Although not all previous publications addressed all classes, we can see that the arc flow model in \cite{CI18} dominates on average the previous works. \framework{} provides the best results overall: It uses a smaller average time, mainly due to the effectiveness of the branching scheme, and solves all instances in class A-r for the first time. The six remaining open instances in classes APT and APT-r have an absolute gap of just one stock sheet.

\begin{table}
\centering
\setlength{\tabcolsep}{4.5pt}
\caption{\small{Comparison with the state-of-the-art for the 2GCSP (time limit 7200s)}}
\label{tab:2gcsp_state_of_the_art}
\small\begin{tabular}{lrrrccrrrrrr}
\toprule
        &        & \multicolumn{2}{c}{MMH \cite{MMH13}}                            & \multicolumn{2}{c}{MAV \cite{MAV10}}                                     & \multicolumn{2}{c}{SAV \cite{SAV10}}                            & \multicolumn{2}{c}{CI \cite{CI18}$^*$}                            & \multicolumn{2}{c}{\framework}                      \\
\cmidrule(lr){3-4} \cmidrule(lr){5-6} \cmidrule(lr){7-8} \cmidrule(lr){9-10} \cmidrule(lr){11-12} 
Class   & \# ins & \multicolumn{1}{c}{time} & \multicolumn{1}{c}{opt} & \multicolumn{1}{c}{time}  & \multicolumn{1}{c}{opt}         & \multicolumn{1}{c}{time} & \multicolumn{1}{c}{opt} & \multicolumn{1}{c}{time} & \multicolumn{1}{c}{opt} & \multicolumn{1}{c}{time} & \multicolumn{1}{c}{opt}  \\

\midrule

A   & 43     & 277.2                    & 42                      & \multicolumn{1}{r}{377.9} & \multicolumn{1}{r}{\textbf{43}} & 735                      & 41                      & 1.9                      & \textbf{43}             & 0.3                      & \textbf{43}              \\
A-r   & 43     & 1014.2                   & 37                      & -                         & -                               & \multicolumn{1}{c}{-}    & \multicolumn{1}{c}{-}   & 606.9                    & 40                      & 59.7                     & \textbf{43}              \\
APT & 20     & \multicolumn{1}{c}{-}    & \multicolumn{1}{c}{-}   & -                         & -                               & 3642.6                   & 12                      & 729.6                    & \textbf{18}             & 472.4                    & \textbf{18}              \\
APT-r & 20     & \multicolumn{1}{c}{-}    & \multicolumn{1}{c}{-}   & -                         & -                               & \multicolumn{1}{c}{-}    & \multicolumn{1}{c}{-}   & 1599.4                   & \textbf{16}             & 1557.4                   & \textbf{16}             \\
\bottomrule
\multicolumn{12}{l}{$*$: results of our reimplementation of the arc flow model in \cite{CI18}}  
\end{tabular}
\end{table}

\subsection{Experiments on the Skiving Stock Problem} \label{sec:experiments_ssp}

\begin{table}[b]
\centering
\caption{\small{Comparison with the state-of-the-art for the SSP (time limit 3600s)}}
\label{tab:ssp_state_of_the_art}
\small\begin{tabular}{crrrrrrr}
\toprule
\multicolumn{1}{l}{}        & \multicolumn{1}{l}{}       & \multicolumn{2}{c}{MDISS \cite{MDISS20}}                        & \multicolumn{2}{c}{MS \cite{MS16a}$^*$}                         & \multicolumn{2}{c}{\framework}                      \\
\cmidrule(lr){3-4} \cmidrule(lr){5-6} \cmidrule(lr){7-8}
\multicolumn{1}{l}{Class}   & \multicolumn{1}{l}{\# ins} & \multicolumn{1}{c}{time} & \multicolumn{1}{c}{opt} & \multicolumn{1}{c}{time} & \multicolumn{1}{c}{opt} & \multicolumn{1}{c}{time} & \multicolumn{1}{c}{opt}  \\
\midrule
A1                           & 1260                       & 0.1                      & \textbf{1260}           & 0.1                      & \textbf{1260}           & 0.1                      & \textbf{1260}            \\
A2          & 1050                       & 250.9                    & 1011                    & 183.4                    & \textbf{1030}           & 148.7                    & 1024                     \\

B  & 160                        & 970.7                    & 126                     & 1354.5                   & 136                     & 61.4                     & \textbf{160}            \\
\bottomrule
\multicolumn{8}{l}{$*$: results of our reimplementation of the arc flow model in \cite{MS16a}}            
\end{tabular}
\end{table}
For the SSP, we compare \framework{} with the reflect formulation in \cite{MDISS20} and with a reimplementation of the arc flow model in \cite{MS16a} executed with the latest version of Gurobi on our computer.
The results in \cite{MDISS20} were obtained by an AMD A10-5800K with 16 GB RAM (STPI 1493).
Our experiments are based on the three benchmark classes A1, composed of 1260 easy instances, A2, composed of 1050 hard instances, and B, composed of 160 hard instances. The arc flow model at the basis of \framework{} is the one in \cite{MS16a}.

Table \ref{tab:ssp_state_of_the_art} gives average times and numbers of instances optimally solved. For class A1, all methods could solve all instances very quickly. For class A2, all methods could solve all instances with up to 100 items, but \framework{} was quicker. For instances with 250 and 500 items, \framework{} could not improve the solution of the arc flow model as a (general) MILP. The reason is that for some instances there are left branches whose resulting problems, although consistently smaller, are very hard to solve. We believe that the source of such difficulties is often an excessive restriction on the number of optimal solutions remaining in these branches. For class B, \framework{} improved upon the other methods and optimally solved all instances for the first time. It consistently decreased the average time w.r.t. the direct solution of the arc flow as a MILP, showing that the branching scheme is very effective for all instances in this class.

\subsection{Experiments on the Ordered Open-End Bin Packing Problem} \label{sec:experiments_ooebpp}
For the OOEBPP, we solve the arc flow model derived from the set covering formulation in \cite{CR08}, either directly by Gurobi or by \framework{}. We compare our results also with the ones obtained by the B\&P in \cite{CR08}, which were obtained by a Pentium IV 1.6 GHz with 512 MB of RAM (STPI 562). 

The first test is based on the benchmark used in \cite{CR08}, which are available at the 2DPackLib.
The results are reported in Table \ref{tab:ooebpp_state_of_the_art}. Ceselli and Righini \cite{CR08} did not solve instances GCUT5-13. Their B\&P already performed very well and left just one open instance. The arc flow model solved either directly by Gurobi or by \framework{} could close all instances (the latter approach used a slightly smaller time).
Since the available benchmarks for the OOEBPP appear to be very easy for current solvers, we propose a second set of experiments based on new randomly generated instances that we derived from the CSP class Random. We consider all instances with 50, 100, and 200 items. The ordering of the items required by the OOEBPP is obtained by sorting the original items by weights randomly generated from a uniform distribution.
The results for these instances are also presented in Table \ref{tab:ooebpp_state_of_the_art}. Overall, \framework{} performs better than arc flow both in terms of average time and number of instances solved. The arc flow solution as a MILP left 55 open instances, but with \framework{} this number decreased to 15, while using a consistently smaller computing time. 

\begin{table}[tbp]
\centering
\caption{\small{Comparison with the state-of-the-art for the OOEBPP (time limit 3600s)}}
\label{tab:ooebpp_state_of_the_art}
\small\begin{tabular}{lrrrrrrr}
\toprule
         & \multicolumn{1}{l}{}       & \multicolumn{2}{c}{CR \cite{CR08}}    & \multicolumn{2}{c}{Arc Flow}                         & \multicolumn{2}{c}{\framework}                   \\
\cmidrule(lr){3-4} \cmidrule(lr){5-6} \cmidrule(lr){7-8}
Class    & \multicolumn{1}{c}{\# ins} & \multicolumn{1}{c}{time} & \multicolumn{1}{c}{opt} & \multicolumn{1}{c}{time} & \multicolumn{1}{c}{opt} & \multicolumn{1}{c}{time} & \multicolumn{1}{c}{opt}  \\
\midrule
GCUT1-4  & 4                          & 0.6                     & \textbf{4}              & 0.1                      & \textbf{4}              & 0.0                      & \textbf{4}               \\
GCUT5-13 & 9                          & \multicolumn{1}{c}{-}    & \multicolumn{1}{c}{-}   & 1.3                      & \textbf{9}              & 0.1                      & \textbf{9}               \\
NGCUT    & 12                         & 0.1                     & \textbf{12}             & 0.0                      & \textbf{12}             & 0.0                      & \textbf{12}              \\
CGCUT    & 3                          & 0.1                     & \textbf{3}              & 0.3                      & \textbf{3}              & 0.4                      & \textbf{3}               \\
BENG     & 10                         & 0.1                     & \textbf{10}             & 0.3                      & \textbf{10}             & 0.1                      & \textbf{10}              \\
HT       & 9                          & 0.1                     & \textbf{9}              & 0.0                      & \textbf{9}              & 0.0                      & \textbf{9}               \\
CLASS    & 500                        & 8.7                      & 499                     & 11.3                     & \textbf{500}            & 3.7                      & \textbf{500}             \\
\midrule
Overall  & 547                        & 1.6                      & 537                     & 1.9                      & \textbf{547}            & 0.6                      & \textbf{547}            \\
\midrule

Random50  & \multicolumn{1}{r}{480} & \multicolumn{1}{c}{-}& \multicolumn{1}{c}{-}& 1.7                      & \textbf{480}            & 1.3                      & \textbf{480}             \\
Random100 & \multicolumn{1}{r}{480} & \multicolumn{1}{c}{-}& \multicolumn{1}{c}{-}& 73.4                     & \textbf{479}            & 38.7                     & \textbf{479}             \\
Random200 & \multicolumn{1}{r}{480} & \multicolumn{1}{c}{-}& \multicolumn{1}{c}{-}& 863.3                    & 426                     & 168.9                    & \textbf{466}             \\
\midrule
Overall                 &   \multicolumn{1}{r}{1440} & \multicolumn{1}{c}{-} & \multicolumn{1}{c}{-}                     & 312.8                    & 1385                    & 69.6                     & \textbf{1425}           \\
\bottomrule
\end{tabular}
\end{table}

\section{Conclusions} \label{sec:conclusions}

We proposed a framework for the exact solution of network flow models. It is tailored to efficiently solve pseudo-polynomial arc flow models that have very strong relaxation but also a huge number of arcs. Its main practical components include a general column(-and-row) generation algorithm for network flow models, RCVF strategies that explore alternative and possibly sub-optimal dual solutions, and a highly asymmetric branching scheme that exploits the potential of MILP solvers.
The correctness of the LP-based methods and of the RCVF strategies are supported by a number of theoretical results.

We performed extensive computational experiments on well-studied C\&P problems, in which we solved a large number of instances to proven optimality for the first time. The new RCVF strategies closed many hard instances already at the root node. 
The new branching scheme helped finding optimal solutions for many hard instances within short computational time, largely improving the results by state-of-the-art algorithms on all problems addressed.

Although we focus on network flow, an extension to more general DWM characterizations is also envisaged. 
The core of the proposed RCVF strategies and branching scheme lies in the ability to efficiently forbid pricing decisions (which in our case are given as arcs in a network). By having this ability in a general pricing algorithm, one can extend such techniques to solve a DWM without relying on a network flow characterization.
In this case, the restricted problem given in the left branch would be solved by alternative methods (as, e.g., specialized combinatorial algorithms) instead than as an arc flow model. 

Other than the C\&P problems presented in this paper, we also tested the solution of a number of other problems by our framework in preliminary experiments. This revealed some interesting drawbacks that we would like to address in future research:
\begin{enumerate}[label=(\roman*)]
\item Iterative aggregation/disaggregation is a state-of-the-art technique to deal with the huge number of arcs in pseudo-polynomial arc flow models for a couple of problems, as, e.g., the time-dependent traveling salesman problem with time windows (see \cite{VHBS19}). In some of these problems, hard instances often produce networks so huge that cannot even fit in the computer memory. This is a critical issue for our framework, because the full network is required in input. We believe that embedding aggregation/disaggregation techniques within the framework is an interesting research direction that may improve the solution of such problems;
\item Our branching scheme is tailored to be effective on models with very strong relaxations, but we tested models with weaker relaxations derived, for instance, from vehicle routing and scheduling problems. As expected, the results were not competitive with the state-of-the-art. The left branch generates small arc flow models, but their weak linear relaxations usually does not allow an efficient solution by a general MILP solver. In addition, the number of right-branch constraints required to significantly raise the bound is often unpractical. Then, to better address such models, we aim at investigating effective primal cuts tailored for general arc flow models.
\end{enumerate}
\subsection*{Acknowledgments}
We thank three anonymous referees of a preliminary version of this paper \cite{LIM21} for their constructive feedback. We also thank Jos\'e M. Val{\'e}rio de Carvalho for the meaningful discussions on the linear relaxation of network flow models, and Fabio Furini for the discussions on the safety of dual solutions in column generation. Finally, we acknowledge the support by CNPq (Proc.~314366/2018-0, 425340/2016-3) and by FAPESP (Proc.~2015/11937-9, 2016/01860-1, 2017/11831-1).

\bibliographystyle{plainnat}

\end{document}